\newtheorem{theorem}{Theorem}[section]
\newtheorem{lemma}[theorem]{Lemma}
\theoremstyle{definition}
\newtheorem{definition}[theorem]{Definition}
\theoremstyle{remark}
\numberwithin{equation}{section}
\newcommand{\nn}{\nonumber}
\def\refe#1{(\ref{#1})}
\def\d{\,{\rm d}}
\begin{document}

\title{\bf Global well-posedness
of the time-dependent 
Ginzburg--Landau superconductivity model
in curved polyhedra 
\footnote{This work was supported in part by 
the NSFC under Grant No. 11301262 and 11401587. }
}
 
\author{Buyang Li \footnote{Department of Mathematics, 
Nanjing University, Nanjing, 210093, P.R. China.
{\tt buyangli@nju.edu.cn}}  
\quad
and\quad Chaoxia Yang
\footnote {College of Science, 
Nanjing University of 
Posts and Telecommunications,
Nanjing, 210023, 
P.R. China. }
}

\date{}

\maketitle

\begin{abstract}
We study the time-dependent 
Ginzburg--Landau equations in 
a three-dimensional curved polyhedron 
(possibly nonconvex). 
Compared with the previous works,
we prove existence and uniqueness
of a global weak solution based on weaker regularity
of the solution in the presence of edges or corners,
where the magnetic potential may not be in 
$L^2(0,T;H^1(\Omega)^3)$.

\vskip 0.2cm
\noindent{\bf Key words.}~ 
superconductivity, 
curved polyhedron,
corner, singularity, 
well-posedness 
\end{abstract}

\maketitle

\section{Introduction}
\setcounter{equation}{0}

The Ginzburg--Landau theory has been widely
accepted to describe macroscopic superconductivity phenomena in
superconductors.
Based on the Ginzburg--Landau theory \cite{GL}
and its extension to the time-dependent model \cite{GE},
the macroscopic state of a superconductor  
can be described by a complex-valued order parameter $\psi$
whose modulus represents the superconductivity density,
and a real vector-valued magnetic potential ${\bf A}$
whose curl represents the magnetic induction field.
If a superconductor occupies a domain $\Omega$, then
$\psi$ and ${\bf A}$ are governed by 
the time-dependent Ginzburg--Landau equations (TDGL) 
which, in a nondimensionalization form, 
can be written as
\begin{align}
&\eta\frac{\partial \psi}{\partial t}
+ \bigg(\frac{i}{\kappa} \nabla + \mathbf{A}\bigg)^{2} \psi
 + (|\psi|^{2}-1) \psi + i\eta\kappa\psi\phi = 0,
\label{GLLPDEq1}\\[5pt]
&\frac{\partial \mathbf{A}}{\partial t} 
+ \nabla\times(\nabla\times{\bf A})
+\nabla \phi+  {\rm Re}\bigg[\psi^*\bigg(\frac{i}{\kappa} \nabla 
+ \mathbf{A}\bigg) \psi\bigg] =  \nabla\times {\bf H},
\label{GLLPDEq2}
\end{align} 
where $\eta$ and $\kappa$ are physical constants
and ${\bf H}$ is the external magnetic field,
$\psi^*$ denotes the complex conjugate of $\psi$, 
and the electric potential $\phi$ is 
an unknown real scalar-valued function. 
Physically there should be $0\leq |\psi|\leq 1$, 
where $|\psi|=0$ and $|\psi|=1$ correspond to 
the normal state and superconducting state,  respectively,
and $0<|\psi|<1$ represents a mixed state.
More detailed discussion of the physics can be found in \cite{CHO,DGP92,Gennes,Tinkham}.

Clearly, the three unknown variables $\psi$, ${\bf A}$ and $\phi$
cannot be determined uniquely
by the two equations above.
Thus an additional gauge condition is required.
The zero electric potential gauge $\phi=0$ is 
widely used by physicists  
\cite{FUD91,GKLLP96, LMG91,WA02}.
Under this gauge, existence and uniqueness of solutions  
have been proved in \cite{Du94}
when the superconductor occupies a smooth domain.
Finite element approximations of the equations 
and its convergence have also been studied in many
works, e.g., \cite{Du05,Mu97,MH98}.
Existence and uniqueness of weak solutions
have also been proved in \cite{Tang95}
under the Coulomb gauge $\nabla\cdot{\bf A}=0$
(also in smooth domains),
which is less used in numerical simulations as it increases
the computational cost. To avoid the degeneracy caused by
the zero electric potential gauge,
we focus on the Lorentz gauge $\phi=-\nabla\cdot{\bf A}$ 
in this paper. Under this gauge, 
\refe{GLLPDEq1}-\refe{GLLPDEq2} 
reduce to 
\begin{align}
&\eta\frac{\partial \psi}{\partial t} 
+ \bigg(\frac{i}{\kappa} \nabla + \mathbf{A}\bigg)^{2} \psi
 + (|\psi|^{2}-1) \psi 
 -i\eta \kappa \psi \nabla\cdot{\bf A} = 0,
\label{PDE1}\\[5pt]
&\frac{\partial \mathbf{A}}{\partial t} 
+ \nabla\times(\nabla\times{\bf A})
-\nabla(\nabla\cdot{\bf A}) 
+  {\rm Re}\bigg[\psi^*\bigg(\frac{i}{\kappa} \nabla 
+ \mathbf{A}\bigg) \psi\bigg] =  \nabla\times {\bf H} .
\label{PDE2}
\end{align}
The natural boundary and initial 
conditions for the problem are 
\begin{align}
& \nabla \psi\cdot{\bf n}= 0,
\quad {\bf A}\cdot{\bf n}=0, 
\quad (\nabla\times{\bf A})\times{\bf n}= {\bf H} \times{\bf n} , 
\quad \mathrm{on}\,\,\,\, \partial \Omega \times (0,T),&
\label{bc}\\
& \psi(x,0) = \psi_{0}(x), \quad \mathbf{A}(x,0) =
\mathbf{A}_{0}(x), \qquad\, \mathrm{in}\,\,\,\,  \Omega \, ,
\label{init}
\end{align}
where $\mathbf{n}$ denotes the unit outward 
normal vector on the boundary
$\partial\Omega$.

Existence and uniqueness of solutions of 
\refe{PDE1}-\refe{init} have been
proved in \cite{CHL} in smooth domains,
where the authors also proved that 
the Lorentz and
temporal gauges are equivalent in 
producing the physical quantities 
such as $|\psi|$ and $\nabla\times{\bf A}$.  
The long time behavior of the solutions
and vortex dynamics have been studied in 
\cite{Spirn02}.
Finite element approximations of
\refe{PDE1}-\refe{init} and their convergence 
have also been studied in many works, e.g., 
\cite{Chen97,CD01,GLS}. 
In both the theoretical and numerical analyses, 
the proofs presented in these works have used the
equivalence relation 
$\|\nabla\times{\bf A}\|_{{\bf L}^2}^2
+\|\nabla\cdot{\bf A}\|_{L^2}^2\sim 
\|\nabla {\bf A}\|_{{\bf L}^2}^2$, which holds for any 
${\bf A}\in H^1(\Omega)^3$ such that 
${\bf A}\cdot{\bf n}=0$ on $\partial\Omega$.
The previous analyses based on this equivalence relation 
showed that
${\bf A}\in L^2(0,T;H^1(\Omega)^3)\hookrightarrow 
L^2(0,T;L^6(\Omega)^3)$.
However, when the domain contains edges or corners
whose interior angles are larger than $180^\circ$, 
the equivalence relation does not hold any more
and ${\bf A}$ does not have the regularity above
in general.  
Thus the previous analyses 
cannot be extended to nonconvex polyhedra directly. 
Since numerical simulations of the TDGL 
in nonsmooth domains are important for physicists 
to study the surface effects in superconductivity  
\cite{PSB02,VMB03}, it is important to know whether
the TDGL is well-posed and whether the finite element solutions
converge in a general curved polyhedron. 
This question is answered in \cite{LZ1,LZ2}
in the two-dimensional
case, where a 
new equivalent system of equations
was introduced 
to compute the solutions of the TDGL.
Existence and uniqueness of weak solutions
for the TDGL, as well as the new system, 
were proved in two-dimensional 
nonconvex polygons. 
It is noted that the standard finite element solution of the
TDGL may converge to an incorrect solution,
while the finite element solution 
of the new equivalent system converges to the true solution. 
Unfortunately, the new equivalent system introduced
in \cite{LZ1,LZ2} cannot be extended to the three-dimensional 
model, and the regularity of the solution in a 
three-dimensional curved polyhedron 
is even weaker than the two-dimensional solutions. 
In this paper, we prove existence of weak solutions for
the initial-boundary value problem 
\refe{PDE1}-\refe{init}
in a general three-dimensional curved polyhedron
based on the weaker embedding inequality
$\|{\bf A}\|_{{\bf L}^{3+\delta}(\Omega)}
\leq C\|{\bf A}\|_{{\bf H}_{\rm n}({\rm curl,div})}$,
by constructing approximating solutions
which preserve the physical property 
$0\leq |\psi|\leq 1$.
Then we prove uniqueness of the weak solution
based on the regularity proved.

We would like to mention that, similar as the
two-dimensional case, 
the standard finite element solutions
of \refe{PDE1}-\refe{init}
would converge to an incorrect solution
in some nonsmooth domains.
Numerical approximation of \refe{PDE1}-\refe{init}
in general three-dimensional curved polyhedra
is challenging (under either gauge).
The current paper provides a 
theoretical foundation for numerical analysis 
of the TDGL in such domains.

\section{Main results}\label{femmethod}
\setcounter{equation}{0}

Let $\Omega$ be a curved polyhedron, i.e.
a bounded Lipschitz domain with piecewise smooth boundary such
that any boundary point is contained in a neighborhood which is 
$C^\infty$-diffeomorphic to a neighborhood of a boundary point
of a polyhedron; see section 4 of \cite{BS87}
and \cite{CD99}.
For any nonnegative integer $k$, we 
denote by $H^{k}(\Omega)$ and ${\cal H}^{k}(\Omega)$
the conventional Sobolev spaces
of real-valued and complex-valued 
functions defined in $\Omega$, respectively,
with $L^2(\Omega)=H^0(\Omega)$ and
${\mathcal  L}^2(\Omega)={\mathcal  H}^0(\Omega)$;
see \cite{Adams}. 
To simplify the notations, we
denote $H^{k}=H^{k}(\Omega)$,
${\mathcal  H}^{k}={\mathcal  H}^{k}(\Omega)$, 
$L^p=L^p(\Omega)$, 
${\mathcal  L}^p={\mathcal  L}^p(\Omega)$
and ${\bf L}^p:= L^p(\Omega)^3 $
for $1\leq p\leq\infty$,
and we define the function spaces  
\begin{align*}
&{\bf H}^1_{\rm n}(\Omega):=\{{\bf a}\in H^1(\Omega)^3: 
{\bf a}\cdot{\bf n}=0~\,\mbox{on}~\,\partial\Omega\},
\quad
{\bf H}({\rm curl}) :=\{{\bf a}\in {\bf L}^2: \nabla\times{\bf a}\in L^2\} ,\\
&{\bf H}_{\rm n}({\rm div}) :=\{{\bf a}\in {\bf L}^2: \nabla\cdot{\bf a}\in L^2~\mbox{and}
~{\bf a}\cdot{\bf n}=0~\mbox{on}~\partial\Omega\} ,\\
&{\bf H}_{\rm n}({\rm curl},{\rm div}):=\{{\bf a}\in {\bf L}^2: \nabla\times {\bf a}\in {\bf L}^2,
~\nabla\cdot{\bf a}\in L^2~\mbox{and}
~{\bf a}\cdot{\bf n}=0~\mbox{on}~\partial\Omega\},
\end{align*}
where $C^\infty(\overline\Omega)$ is dense in 
${\bf H}_{\rm n}({\rm div})$
but may not be dense in ${\bf H}_{\rm n}({\rm curl},{\rm div})$
(depending on the convexity of the domain).
For any two functions $f,g\in {\mathcal  L}^2$ we define
$(f,g)=\int_\Omega f(x)g(x)^*\d x $,
and for any two vector fields ${\bf f},{\bf g}\in {\bf L}^2$ we define
$({\bf f},{\bf g})=\int_\Omega {\bf f}(x)\cdot{\bf g}(x)^*\d x $.

\begin{definition}\label{DefWSol}
{\bf(Weak solution)}
{\it 
The pair $(\psi,{\bf A})$ is called a weak solution of 
{\rm\refe{PDE1}-\refe{init}} if
\begin{align*}
&\psi\in C([0,T];{\mathcal  L}^2)\cap 
L^\infty(0,T;{\mathcal  H}^1) ,
\quad \partial_t\psi \in L^2(0,T;{\mathcal  L}^2),
\quad |\psi|\leq 1~~\mbox{a.e.~in~\,}\Omega\times(0,T),\\
& {\bf A}\in C([0,T];{\bf L}^2)\cap 
L^{\infty}(0,T;{\bf H}_{\rm n}({\rm curl},{\rm div})) , \\
&
\partial_t{\bf A}\in L^2(0,T;{\bf L}^2),\quad
\nabla\times {\bf A}\in L^2(0,T;{\bf H}({\rm curl})),\quad
\nabla\cdot{\bf A}\in L^2(0,T;H^1),
\end{align*}
with $\psi(\cdot,0)=\psi_0 $, 
${\bf A}(\cdot,0)={\bf A}_0$, 
and the variational equations 
\begin{align}
&\int_0^T\bigg[\bigg(\eta\frac{\partial \psi}{\partial t} ,\varphi\bigg)
+ \bigg(\bigg(\frac{i}{\kappa} \nabla + \mathbf{A}\bigg)  \psi,
\bigg(\frac{i}{\kappa} \nabla + \mathbf{A}\bigg)\varphi\bigg)\bigg]\d t\nn\\
&\qquad\qquad\quad +\int_0^T\bigg[\bigg( (|\psi|^{2}-1) \psi
 -i\eta \kappa \psi \nabla\cdot{\bf A},\varphi\bigg)\bigg]\d t = 0,
\label{VPDE1}\\[15pt]
&\int_0^T\bigg[\bigg(\frac{\partial \mathbf{A}}{\partial t} ,{\bf a}\bigg)
+ \big(\nabla\times{\bf A},\nabla\times{\bf a}\big)
+\big(\nabla\cdot{\bf A},\nabla\cdot{\bf a}\big) \bigg]\d t   \nn\\
&=
 \int_0^T\bigg[\big({\bf H} , \nabla\times {\bf a}\big)
 -\bigg( {\rm Re}\bigg[\psi^*\bigg(\frac{i}{\kappa} \nabla 
+ \mathbf{A}\bigg) \psi\bigg],{\bf a}\bigg) \bigg]\d t,
\label{VPDE2}
\end{align}
hold for all $\varphi\in L^2(0,T;{\mathcal  H}^1)$ 
and ${\bf a}\in L^2(0,T;{\bf H}_{\rm n}({\rm curl},{\rm div}))$.
}
\end{definition}

The main result of this paper is the following theorem,
whose proof is presented in the next section.

\setcounter{theorem}{0}
\begin{theorem}\label{MainTHM1}
{\bf(Global well-posedness of the TDGL in a curved polyhedron)}$~$\\
{\it
For any given $T>0$, if 
$\psi_0\in {\mathcal  H}^1$,
$|\psi_0|\leq 1$ a.e. in $\Omega$,
${\bf A}_0\in {\bf H}_{\rm n}({\rm curl},{\rm div})$ and 
the external magnetic field ${\bf H}$ is a divergence-free
vector field such that
${\bf H} \in L^\infty(0,T;{\bf L}^2)\cap L^2(0,T;{\bf H}({\rm curl}))$, then 
the problem {\rm\refe{PDE1}-\refe{init}} 
has a unique weak solution
in the sense of Definition {\rm\ref{DefWSol}}.
}
\end{theorem}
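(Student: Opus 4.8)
The plan is to construct approximate solutions by a Galerkin-type scheme in which the cubic nonlinearity is truncated to have linear growth, arranged so that the approximate order parameters inherit the bound $|\psi|\le1$; to derive a priori estimates in which every term involving $\mathbf A$ is controlled through the embedding $\|\mathbf A\|_{\mathbf L^{3+\delta}}\le C\|\mathbf A\|_{\mathbf H_{\rm n}({\rm curl},{\rm div})}$ instead of an $H^{1}$ bound (which is unavailable near reentrant edges and corners); to pass to the limit by compactness; and to prove uniqueness by an energy estimate for the difference of two solutions. Concretely, first replace $(|\psi|^{2}-1)\psi$ by $g(|\psi|^{2})\psi$ with $g$ smooth, nondecreasing, bounded, $g(s)=s-1$ for $0\le s\le1$ (so $g(s)\ge0$ for $s\ge1$ and $g(1)=0$), so the modified nonlinearity has linear growth and coincides with the original one on $\{|\psi|\le1\}$. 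For the Galerkin bases take the Neumann eigenfunctions of $-\Delta$ for $\psi$ and the eigenfunctions of the self-adjoint nonnegative operator $L\mathbf a=\nabla\times(\nabla\times\mathbf a)-\nabla(\nabla\cdot\mathbf a)$ generated by the form $(\nabla\times\mathbf a,\nabla\times\mathbf b)+(\nabla\cdot\mathbf a,\nabla\cdot\mathbf b)$ on $\mathbf H_{\rm n}({\rm curl},{\rm div})$ for $\mathbf A$; since the embedding $\mathbf H_{\rm n}({\rm curl},{\rm div})\hookrightarrow\mathbf L^{2}$ is compact, $L$ has discrete spectrum and its eigenfunctions are dense in $\mathbf H_{\rm n}({\rm curl},{\rm div})$. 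Because the Galerkin order parameter carries no pointwise bound, I would add a further regularization—smoothing $\mathbf A$ inside the $\psi$-equation—so that the $\psi$-equation becomes a genuine parabolic equation for which the Kato-type identity
\[
\tfrac{\eta}{2}\partial_{t}|\psi|^{2}-\tfrac{1}{2\kappa^{2}}\Delta|\psi|^{2}+g(|\psi|^{2})|\psi|^{2}=-\big|(\tfrac{i}{\kappa}\nabla+\mathbf A)\psi\big|^{2}\le0,\qquad \nabla|\psi|^{2}\cdot\mathbf n=0\ \text{on}\ \partial\Omega,
\]
holds; together with $|\psi_{0}|\le1$ and comparison with the constant $1$, this forces $|\psi|\le1$ at the approximate level. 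The finite-dimensional system is solved by Picard iteration and continued globally by the estimates below.

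The a priori estimates are the technical core, and they are derived on the approximate solutions, which by construction obey $|\psi|\le1$. Testing the $\psi$-equation by (the conjugate of) its own coefficients and the $\mathbf A$-equation by its own, summing and taking real parts, the gauge term $-i\eta\kappa\psi\,\nabla\cdot\mathbf A$ drops out (it is purely imaginary after pairing with $\psi^{*}$), and, using $|\psi|\le1$ together with Young's inequality for the current term ${\rm Re}[\psi^{*}(\tfrac{i}{\kappa}\nabla+\mathbf A)\psi]$, one gets $\psi$ bounded in $L^{\infty}(0,T;\mathcal L^{2})$, $(\tfrac{i}{\kappa}\nabla+\mathbf A)\psi$ in $L^{2}(0,T;\mathbf L^{2})$, $\mathbf A$ in $L^{\infty}(0,T;\mathbf L^{2})$, and $\nabla\times\mathbf A,\nabla\cdot\mathbf A$ in $L^{2}(0,T;L^{2})$. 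Testing the $\psi$-equation by $\partial_{t}\psi$ and using $(\tfrac{i}{\kappa}\nabla+\mathbf A)\partial_{t}\psi=\partial_{t}[(\tfrac{i}{\kappa}\nabla+\mathbf A)\psi]-(\partial_{t}\mathbf A)\psi$—the $\mathbf A$-dependent terms being handled by H\"older products of the form $\|\mathbf A\|_{\mathbf L^{3+\delta}}\|\psi\|_{\mathcal L^{6}}\|\cdot\|_{\mathcal L^{2}}$ (and by $|\psi|\le1$)—yields $\psi$ bounded in $L^{\infty}(0,T;\mathcal H^{1})$ and $\partial_{t}\psi$ in $L^{2}(0,T;\mathcal L^{2})$. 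Finally, because the natural boundary conditions in \refe{bc} make the boundary contributions of $\nabla\times\mathbf A$ and of $\mathbf H$ in \refe{VPDE2} consistent, the $\mathbf A$-equation is a parabolic problem governed by the sectorial operator $L$ with right-hand side $\nabla\times\mathbf H-{\rm Re}[\psi^{*}(\tfrac{i}{\kappa}\nabla+\mathbf A)\psi]$, which—thanks to $|\psi|\le1$, $(\tfrac{i}{\kappa}\nabla+\mathbf A)\psi\in L^{2}(0,T;\mathbf L^{2})$ and $\mathbf H\in L^{\infty}(0,T;\mathbf L^{2})\cap L^{2}(0,T;\mathbf H({\rm curl}))$—lies in the appropriate space; maximal parabolic regularity for $L$ with $\mathbf A_{0}$ in the form domain $\mathbf H_{\rm n}({\rm curl},{\rm div})$ then gives $\mathbf A$ bounded in $L^{\infty}(0,T;\mathbf H_{\rm n}({\rm curl},{\rm div}))$, $\partial_{t}\mathbf A$ in $L^{2}(0,T;\mathbf L^{2})$, $\nabla\times\mathbf A$ in $L^{2}(0,T;\mathbf H({\rm curl}))$, and $\nabla\cdot\mathbf A$ in $L^{2}(0,T;H^{1})$. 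All bounds are uniform in the approximation parameters.

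With these bounds, the Aubin--Lions lemma (the $\partial_{t}$-bounds against the spatial bounds) yields strong convergence of the approximate $\psi$ in $C([0,T];\mathcal L^{2})\cap L^{2}(0,T;\mathcal L^{p})$ for $p<6$ and of $\mathbf A$ in $C([0,T];\mathbf L^{2})\cap L^{2}(0,T;\mathbf L^{q})$ for $q<3+\delta$, which is enough to pass to the limit in \refe{VPDE1}--\refe{VPDE2} and in every nonlinear term (note that $\mathbf A\psi\in L^{\infty}(0,T;\mathbf L^{s})$ with $s>2$ already makes the weak formulation meaningful thanks to the embedding into $\mathbf L^{3+\delta}$); the truncation $g$ disappears because the limit still obeys $|\psi|\le1$, and the initial data are recovered from the $C([0,T];\cdot)$ bounds. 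For uniqueness, writing $(\psi,\mathbf A)=(\psi_{1}-\psi_{2},\mathbf A_{1}-\mathbf A_{2})$, one subtracts the two weak formulations, tests the $\psi$-part by $\psi$ and the $\mathbf A$-part by $\mathbf A$ (the $\mathbf H$-terms cancel), controls the cubic difference by $|\psi_{i}|\le1$, and controls the magnetic differences using the embedding $\mathbf H_{\rm n}({\rm curl},{\rm div})\hookrightarrow\mathbf L^{3+\delta}$ together with the established regularity—in particular $\mathbf A_{i}\in L^{\infty}(0,T;\mathbf H_{\rm n}({\rm curl},{\rm div}))$, so $(\tfrac{i}{\kappa}\nabla+\mathbf A_{i})\psi_{i}\in L^{\infty}(0,T;\mathbf L^{2})$, and $|\psi_{i}|\le1$, which lets $\|\psi\|_{\mathcal L^{r}}$ be interpolated between $\|\psi\|_{\mathcal L^{2}}$ and $\|\psi\|_{L^{\infty}}$—so that, after absorbing the good terms, Gr\"onwall's inequality gives $\psi\equiv0$, $\mathbf A\equiv0$.

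The main obstacle is precisely the one highlighted in the introduction: in a nonconvex curved polyhedron the identity $\|\nabla\mathbf A\|_{\mathbf L^{2}}^{2}\sim\|\nabla\times\mathbf A\|_{\mathbf L^{2}}^{2}+\|\nabla\cdot\mathbf A\|_{\mathbf L^{2}}^{2}$ fails, $\mathbf A$ never enters an $H^{1}$ (hence $\mathbf L^{6}$) bound, and every quadratic or cubic term containing $\mathbf A$ must be pushed through the strictly weaker embedding $\mathbf H_{\rm n}({\rm curl},{\rm div})\hookrightarrow\mathbf L^{3+\delta}$, where the H\"older exponents only barely close. The two delicate consequences are: (a) one must design a single approximation scheme that simultaneously produces the uniform estimates \emph{and} the pointwise bound—this is nontrivial because $\psi$ has no regularity beyond $\mathcal H^{1}$, so the natural test function $(|\psi|^{2}-1)^{+}\psi$ is not admissible in \refe{VPDE1} and the maximum-principle argument must be carried out on a regularized problem and then transferred to the limit; and (b) closing the uniqueness estimate under this low regularity is the subtlest step, where the full strength of the proved regularity—the $\mathbf H_{\rm n}({\rm curl},{\rm div})$-bound on $\mathbf A$ and the uniform bound on $\psi$—is really needed, since a naive energy estimate on the difference produces terms of type $\|\psi\|_{\mathcal L^{r}}$ with $r>2$ whose interpolation back to $\mathcal L^{2}$ is only sublinear.
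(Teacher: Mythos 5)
Your overall architecture matches the paper's: the embedding $\mathbf H_{\rm n}({\rm curl},{\rm div})\hookrightarrow\mathbf L^{3+\delta}$ (proved there via the Birman--Solomyak decomposition plus Jerison--Kenig), a maximum-principle bound $|\psi|\le1$ at the approximate level, a Galerkin scheme built on the eigenfunctions of the form $(\nabla\times\cdot,\nabla\times\cdot)+(\nabla\cdot\,\cdot,\nabla\cdot\,\cdot)$, energy estimates, Aubin--Lions, and an energy argument for uniqueness. But there are two genuine gaps. First, your approximation scheme is internally inconsistent: you expand $\psi$ in Neumann eigenfunctions of $-\Delta$, yet the Kato-type inequality and comparison with the constant $1$ require testing the $\psi$-equation with functions like $\psi^*(|\psi|^2-1)_+$ (or running a comparison argument), which are not in the finite-dimensional span; a Galerkin projection of $\psi$ does not inherit $|\psi|\le1$, and smoothing $\mathbf A$ does not repair this. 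This is not cosmetic, because the pointwise bound is indispensable for closing the uniform estimates (e.g.\ the gauge term $\eta\kappa\,{\rm Im}(\psi\,\partial_t\psi^*)\,\nabla\cdot\mathbf A$ and the supercurrent term can only be absorbed using $|\psi|\le1$; the truncation $g$ does not tame them). The paper resolves this with a \emph{semi}-Galerkin scheme: only $\mathbf A$ is projected onto ${\bf X}_N$, while $\psi$ solves the genuine parabolic equation in all of ${\mathcal H}^1$, and the coupling is handled by Schaefer's fixed point theorem rather than Picard iteration (which is not obviously contractive once $\psi$ lives in an infinite-dimensional space). You should commit to that structure explicitly.

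Second, your uniqueness argument as written does not close. The dangerous terms are of the form $\|\mathbf A\|_{\mathbf L^{3+\delta}}\,\|\nabla e\|_{\mathcal L^2}\,\|e\|_{\mathcal L^{r}}$ with $r=6-4\delta/(1+\delta)\in(2,6)$, and you propose to interpolate $\|e\|_{\mathcal L^{r}}$ between $\mathcal L^2$ and $L^\infty$. That yields $\|e\|_{\mathcal L^{r}}\le C\|e\|_{\mathcal L^2}^{2/r}$, hence after Young's inequality a term $C\|e\|_{\mathcal L^2}^{4/r}$ with $4/r<2$; the resulting differential inequality $y'\le Cy^{\beta}$ with $\beta<1$ and $y(0)=0$ does \emph{not} force $y\equiv0$ (you yourself note the sublinearity but do not resolve it). The correct move, and the one the paper makes, is to interpolate between $\mathcal L^2$ and $\mathcal L^6\hookleftarrow\mathcal H^1$, i.e.\ $\|e\|_{\mathcal L^{r}}\le\epsilon\|\nabla e\|_{\mathcal L^2}+C_\epsilon\|e\|_{\mathcal L^2}$ for $r<6$, and to absorb the $\epsilon\|\nabla e\|_{\mathcal L^2}^2$ contribution into the dissipation $\kappa^{-2}\|\nabla e\|_{\mathcal L^2}^2$ on the left (and similarly absorb $\epsilon\|\nabla\times\mathbf E\|^2+\epsilon\|\nabla\cdot\mathbf E\|^2$); this is precisely where the strict inequality $r<6$, i.e.\ $\delta>0$ in the embedding, is used, and it restores a quadratic Gr\"onwall inequality. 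With these two repairs your plan coincides with the paper's proof.
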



\section{Proof of Theorem \ref{MainTHM1}}\label{WPness}
\setcounter{equation}{0}

In this section, we prove existence of 
a weak solution for the TDGL by constructing
approximating solutions and applying a compactness argument.
Uniqueness 
is then proved based on the regularity of the weak solution.
Unlike the two-dimensional case studied in \cite{LZ2}, 
the solution of the three-dimensional problem has weaker regularity,
which causes more difficulty in the compactness argument.
To overcome the difficulty, 
a different approach is used to construct approximating
solutions in order to control the order parameter pointwisely.
Uniqueness of the weak solution also 
needs to be proved under the weaker regularity.

To simplify the notations,
we denote by $C_{p_1,\cdots,p_l}$ a generic positive constant
which may be different at each occurrence,
depending on the parameters $p_1,\cdots,p_l$, 
and denote by $\epsilon$ a small generic positive constant.

\subsection{Some preliminary lemmas}
Here we present some preliminary
lemmas which will be used in the next subsection
in proving existence of weak solutions.
Lemma \ref{HnEbL3} is concerned with
the embedding of ${\bf H}_{\rm n}({\rm curl},{\rm div})$
into ${\bf L}^{3+\delta}$, which is 
used into control the magnetic potential, 
and Lemma \ref{UnBDPsi} is used to control the order
parameter pointwisely.

\begin{lemma}\label{HnEbL3}
There exists a positive constant $\delta$ 
{\rm(}which depends on the domain $\Omega${\rm)} such that
${\bf H}_{\rm n}({\rm curl},{\rm div})
\hookrightarrow {\bf L}^{3+\delta}$.
\end{lemma}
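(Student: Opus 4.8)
The plan is to deduce the embedding from the stronger statement that vector fields in ${\bf H}_{\rm n}({\rm curl},{\rm div})$ enjoy a small gain of fractional Sobolev regularity, and then to apply a Sobolev embedding. Precisely, I would first prove that there is an exponent $s\in(1/2,1)$, depending only on $\Omega$, for which
\begin{align}
{\bf H}_{\rm n}({\rm curl},{\rm div})\hookrightarrow {\bf H}^s(\Omega):=H^s(\Omega)^3 ,
\label{HsEmb}
\end{align}
where $H^s(\Omega)$ is the usual fractional Sobolev space. Since $\Omega\subset\R^3$, the fractional Sobolev embedding gives ${\bf H}^s(\Omega)\hookrightarrow{\bf L}^{q}$ with $q=\frac{6}{3-2s}$, and $s>1/2$ forces $q>3$; hence \refe{HsEmb} yields the lemma with $\delta:=q-3=\frac{3(2s-1)}{3-2s}>0$.

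To obtain \refe{HsEmb} I would combine a Helmholtz-type decomposition with the regularity theory for second-order elliptic boundary value problems on domains with edges and conical points. For ${\bf a}\in{\bf H}_{\rm n}({\rm curl},{\rm div})$, let $\chi\in H^1$ with $\int_\Omega\chi\d x=0$ solve the Neumann problem $\Delta\chi=\nabla\cdot{\bf a}$ in $\Omega$, $\partial_{\bf n}\chi=0$ on $\partial\Omega$ (the solvability condition $\int_\Omega\nabla\cdot{\bf a}\d x=\int_{\partial\Omega}{\bf a}\cdot{\bf n}\d s=0$ holds because ${\bf a}\cdot{\bf n}=0$), and set ${\bf w}:={\bf a}-\nabla\chi$, so that $\nabla\cdot{\bf w}=0$, ${\bf w}\cdot{\bf n}=0$ on $\partial\Omega$, and $\nabla\times{\bf w}=\nabla\times{\bf a}\in{\bf L}^2$. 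The gradient part: $\chi$ solves the Neumann Laplacian with right-hand side in $L^2$, so the corner and edge regularity theory gives $\chi\in H^{1+s_1}(\Omega)$, where $s_1$ is bounded below by $\pi/\omega_{\max}$ (with $\omega_{\max}<2\pi$ the largest interior edge-opening angle of $\Omega$) and by the singular exponents of the associated vertex problems, all of which exceed $1/2$ precisely because $\Omega$ is a \emph{Lipschitz} curved polyhedron; hence $\nabla\chi\in{\bf H}^{s_1}(\Omega)$. The solenoidal part: after subtracting its component in the finite-dimensional space of harmonic Neumann fields $\{{\bf h}\in{\bf L}^2:\nabla\times{\bf h}=0,\ \nabla\cdot{\bf h}=0,\ {\bf h}\cdot{\bf n}=0\}$ — whose elements are local gradients of Neumann-harmonic functions and hence again lie in ${\bf H}^s$ for some $s>1/2$ by the same analysis — one writes ${\bf w}=\nabla\times{\bf u}$ with a gauge condition $\nabla\cdot{\bf u}=0$ and a compatible boundary condition, so that $-\Delta{\bf u}=\nabla\times{\bf w}\in{\bf L}^2$ in the weak sense; invoking the elliptic regularity theory once more yields ${\bf u}\in{\bf H}^{1+s_2}(\Omega)$ with $s_2>1/2$, whence ${\bf w}=\nabla\times{\bf u}\in{\bf H}^{s_2}(\Omega)$. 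Taking $s=\min(s_1,s_2)$ and noting that each step is a bounded linear operation proves \refe{HsEmb}.

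Since $\Omega$ is a \emph{curved} polyhedron, the two elliptic regularity statements are reduced to the straight case by a partition of unity subordinate to a cover by an interior patch, patches meeting a smooth face, edge patches and vertex patches: on interior and flat-face patches one uses the classical $H^2$ estimate, while on an edge or vertex patch the $C^\infty$-diffeomorphism flattening the boundary (part of the definition of a curved polyhedron) transports the Laplacian to an operator with smooth coefficients whose principal part agrees, at the singular point, with a constant-coefficient operator on a straight wedge or cone, the coefficient perturbation and lower-order terms being absorbed by a standard localization and a finite bootstrap that leave the $H^s$-scale invariant. (One could instead quote \refe{HsEmb} directly, as it is classical for Lipschitz polyhedra — following Amrouche--Bernardi--Dauge--Girault and Costabel--Dauge — its extension to curved polyhedra being exactly this routine localization.) The main obstacle, and the only place the hypothesis on $\Omega$ is essential, is the \emph{strict} inequality $s>1/2$: for a general bounded Lipschitz domain one has only ${\bf H}_{\rm n}({\rm curl},{\rm div})\hookrightarrow{\bf H}^{1/2}(\Omega)\hookrightarrow{\bf L}^{3}$, the borderline case, which produces no $\delta>0$; the gain rests entirely on the facts that a curved polyhedron has edge angles in $(0,2\pi)$ and Lipschitz vertex cones, so that the leading singular exponents of both the scalar Neumann problem and the vector problem for ${\bf u}$ are bounded below by a number strictly greater than $1/2$. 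A secondary technical point is the bookkeeping of fractional Sobolev norms under the flattening diffeomorphisms.
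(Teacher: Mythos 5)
Your strategy is sound but genuinely different from the paper's. The paper does not pass through fractional Sobolev spaces at all: it invokes the Birman--Solomyak decomposition ${\bf u}={\bf v}+\nabla\varphi$, in which the field part ${\bf v}$ is already in $H^1(\Omega)^3\hookrightarrow {\bf L}^6$ and \emph{all} the singularity is carried by a Neumann potential $\varphi$ with $\Delta\varphi\in L^2$ controlled by $\|{\bf u}\|_{{\bf H}_{\rm n}({\rm curl},{\rm div})}$, and then applies the Jerison--Kenig estimate $\|\nabla\varphi\|_{{\bf L}^{3+\delta}}\leq C\|\Delta\varphi\|_{L^2}$, valid in an \emph{arbitrary} bounded Lipschitz domain. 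That route buys both generality (the polyhedral structure is never used, so the lemma holds for any bounded Lipschitz domain, even though the embedding into ${\bf H}^{1/2+\epsilon}$ may fail there) and brevity (no corner asymptotics, no flattening of curved faces). Your route through ${\bf H}_{\rm n}({\rm curl},{\rm div})\hookrightarrow {\bf H}^{1/2+\epsilon}(\Omega)$ is the Amrouche--Bernardi--Dauge--Girault / Costabel--Dauge theorem; where it applies it gives the strictly stronger conclusion of a fractional-derivative gain, and your exponent arithmetic $\delta=3(2s-1)/(3-2s)$ is correct, but the argument genuinely needs the polyhedral geometry, as you observe.

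The one step you should not present as you have is the solenoidal part. Writing ${\bf w}=\nabla\times{\bf u}$ with $\nabla\cdot{\bf u}=0$ and ``a compatible boundary condition'' and then ``invoking elliptic regularity'' for $-\Delta{\bf u}=\nabla\times{\bf w}$ is not a reduction to a scalar corner problem: the boundary conditions forced on ${\bf u}$ (tangential trace and surface divergence conditions so that $(\nabla\times{\bf u})\cdot{\bf n}=0$) make this precisely the Maxwell/Lam\'e regularity problem on a polyhedron, whose edge and vertex exponents require a separate and nontrivial computation --- that is the content of the Costabel--Dauge paper, not of the standard Kondrat'ev theory for the scalar Laplacian that handles your $\chi$. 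As written the step is close to circular, since the $H^{1+s_2}$ regularity of ${\bf u}$ is a statement of the same nature and difficulty as the embedding you are trying to prove. It is rescued by your parenthetical offer to quote the embedding directly from the literature; if you do that, the proof is complete modulo the (genuinely routine) localization reducing the curved polyhedron to the straight one, but then the analytic core of your argument becomes a citation rather than a proof, whereas the paper's two ingredients (the Birman--Solomyak decomposition and the Jerison--Kenig gradient estimate) are each quoted as black boxes of a more elementary character.
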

\noindent{\it Proof.}$\quad$ It is proved in \cite{BS87}
that any ${\bf u}\in {\bf H}_{\rm n}({\rm curl},{\rm div})$
admits a decomposition
${\bf u}={\bf v}+\nabla\varphi$, 
where $\varphi\in H^1, 
\Delta\varphi\in L^2, \,\,\mbox{and}\,\, \partial_n\varphi=0
\,\,\mbox{on}\,\, \partial\Omega$,
and 
$$\|{\bf v}\|_{{\bf H}^1}+\|\nabla\varphi\|_{{\bf L}^2}
+\|\Delta\varphi\|_{L^2}
\leq C\|{\bf u}\|_{{\bf H}_{\rm n}({\rm curl},{\rm div})} .
$$
From \cite{JK95} we know that 
$$
\|\nabla\varphi\|_{{\bf L}^{3+\delta}}
\leq C\|\Delta\varphi\|_{L^2}
$$
in any bounded Lipschitz domain $\Omega$, where $\delta$
depends on the domain.
It follows that 
$$
\|{\bf u}\|_{{\bf L}^{3+\delta}}
\leq \|{\bf v}\|_{{\bf L}^{3+\delta}}
+\|\nabla\varphi\|_{{\bf L}^{3+\delta}}
\leq C\|{\bf v}\|_{{\bf H}^1}
+C\|\Delta\varphi\|_{L^2}
\leq C\|{\bf u}\|_{{\bf H}_{\rm n}({\rm curl},{\rm div})} .
\qquad \qed\medskip
$$

\begin{lemma}\label{UnBDPsi}
{\it For any given ${\bf A}\in 
L^\infty(0,T;{\bf H}_{\rm n}({\rm curl},{\rm div}))$,  
the nonlinear equation {\rm\refe{PDE1}}
has a unique weak solution 
$\psi\in L^\infty(0,T;{\cal L}^\infty)\cap L^2(0,T;{\cal H}^1)
\cap H^1(0,T;({\mathcal H}^1)')$ 
in the sense of {\rm\refe{VPDE1}}.
Moreover, the solution  
satisfies that $|\psi|\leq 1$ a.e. in $\Omega\times(0,T)$.
}
\end{lemma}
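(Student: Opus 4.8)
\textbf{Proof plan for Lemma \ref{UnBDPsi}.}
The plan is to treat \refe{PDE1} as a semilinear parabolic equation for $\psi$ with ${\bf A}$ regarded as a fixed coefficient, and to obtain existence via a Galerkin (or fixed-point) scheme combined with a truncation of the cubic nonlinearity, then recover the pointwise bound $|\psi|\le 1$ by a suitable energy estimate that exploits the sign structure of the nonlinearity. First I would rewrite \refe{PDE1} in divergence form:
\begin{align*}
\eta\,\partial_t\psi = \frac{1}{\kappa^2}\Delta\psi
- \frac{2i}{\kappa}{\bf A}\cdot\nabla\psi
- \frac{i}{\kappa}(\nabla\cdot{\bf A})\psi
- |{\bf A}|^2\psi
+(1-|\psi|^2)\psi
+ i\eta\kappa(\nabla\cdot{\bf A})\psi ,
\end{align*}
with the homogeneous Neumann condition $\nabla\psi\cdot{\bf n}=0$. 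The terms involving ${\bf A}$ are controlled using Lemma \ref{HnEbL3}: since ${\bf A}\in L^\infty(0,T;{\bf L}^{3+\delta})$, the first-order term ${\bf A}\cdot\nabla\psi$ and the zeroth-order terms $|{\bf A}|^2\psi$, $(\nabla\cdot{\bf A})\psi$ all lie in the correct dual spaces after applying Hölder's inequality with exponents adapted to $3+\delta$, so the standard parabolic theory applies.

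The construction proceeds as follows. Replace the cubic term $|\psi|^2\psi$ by the truncated, Lipschitz nonlinearity $f_M(\psi):=\min(|\psi|^2,M^2)\,\psi$ for a parameter $M>1$, and solve the resulting globally-Lipschitz semilinear problem by a Faedo--Galerkin method in the eigenbasis of the Neumann Laplacian: testing with $\psi$ and using the coercivity $\mathrm{Re}\,(f_M(\psi),\psi)\ge 0$ together with the bounds on the ${\bf A}$-terms (absorbing $\frac{1}{\kappa^2}\|\nabla\psi\|_{{\cal L}^2}^2$ from the Laplacian via Young's inequality) gives a uniform $L^\infty(0,T;{\cal L}^2)\cap L^2(0,T;{\cal H}^1)$ bound; testing with $\partial_t\psi$, or equivalently reading off $\partial_t\psi$ from the equation, gives the $H^1(0,T;({\cal H}^1)')$ bound. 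Passing to the limit in the Galerkin parameter via Aubin--Lions yields a weak solution $\psi_M$ of the truncated problem.

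Next comes the key step: proving $|\psi_M|\le 1$ uniformly in $M$, which then shows $f_M(\psi_M)=|\psi_M|^2\psi_M$ and hence $\psi_M$ solves the original equation \refe{PDE1}. The idea is to test the equation with $(|\psi_M|^2-1)_+\psi_M$ and estimate $\frac{\d}{\d t}\int_\Omega (|\psi_M|^2-1)_+^2\d x$. Writing $w=(|\psi_M|^2-1)_+$, the diffusion term contributes a nonnegative quantity of the form $\int_{\{|\psi_M|>1\}}|\nabla\psi_M|^2$ plus $\int w|\nabla|\psi_M||^2$-type terms; the crucial observation is that the first-order term $\mathrm{Re}\big({\bf A}\cdot\nabla\psi_M,(|\psi_M|^2-1)_+\psi_M\big)$ can be rewritten, using $\mathrm{Re}(\psi_M^*\nabla\psi_M)=\tfrac12\nabla|\psi_M|^2$ and the identity $\nabla\cdot{\bf A}$ appearing after integration by parts, so that it recombines with the imaginary zeroth-order terms and leaves only terms controllable by $\int w^2$ and the diffusion quantity; the reactive term $(1-|\psi_M|^2)|\psi_M|^2\cdot(|\psi_M|^2-1)_+ = -w\,|\psi_M|^2\le 0$ has the favorable sign. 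A Gronwall argument then gives $\int_\Omega w^2\,\d x\le C\,e^{Ct}\int_\Omega (|\psi_0|^2-1)_+^2\,\d x = 0$ since $|\psi_0|\le 1$, hence $|\psi_M|\le 1$ a.e. The main obstacle is precisely this pointwise bound: one must handle the indefinite, imaginary, ${\bf A}$-dependent terms carefully so that all error terms are dominated by the good diffusion term and a harmless multiple of $\|w\|_{{\cal L}^2}^2$, using only ${\bf A}\in{\bf L}^{3+\delta}$ rather than ${\bf A}\in{\bf H}^1$; interpolation of $w\psi_M$ between ${\cal L}^2$ and ${\cal H}^1$ together with the low-regularity embedding is what makes this go through.

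Finally, once $|\psi|\le 1$ is established the solution automatically belongs to $L^\infty(0,T;{\cal L}^\infty)$, and uniqueness follows by a standard energy argument: the difference of two solutions $\psi_1-\psi_2$ satisfies a linear parabolic inequality whose nonlinear term $|\psi_1|^2\psi_1-|\psi_2|^2\psi_2$ is Lipschitz on the ball $\{|\psi|\le 1\}$, and the ${\bf A}$-dependent terms are again estimated through Lemma \ref{HnEbL3}, so testing with $\psi_1-\psi_2$ and applying Gronwall gives $\psi_1\equiv\psi_2$.
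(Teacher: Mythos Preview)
Your overall strategy --- treat \refe{PDE1} as a semilinear parabolic equation with ${\bf A}$ as a fixed coefficient, establish the bound $|\psi|\le 1$ by testing against $\psi^*(|\psi|^2-1)_+$, then read off the claimed regularity and uniqueness --- is the same as the paper's. The difference is in the regularization step, and that difference creates a gap in your argument as written.

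The paper does \emph{not} truncate the nonlinearity or run a Galerkin scheme for \refe{PDE1}. Instead it approximates ${\bf A}$ by smooth ${\bf A}_m\to{\bf A}$ in ${\bf L}^{3+\delta}\cap{\bf H}_{\rm n}({\rm div})$; for each fixed $m$ the coefficients are smooth, so standard parabolic theory already yields $\psi_m\in L^\infty(0,T;{\cal H}^1\cap{\cal L}^\infty)$. The point of this is that the test function $\psi_m^*(|\psi_m|^2-1)_+$ is then automatically admissible in the weak formulation. In your scheme, after the Galerkin limit the truncated solution $\psi_M$ is only known to lie in $L^\infty(0,T;{\cal L}^2)\cap L^2(0,T;{\cal H}^1)$; but $(|\psi_M|^2-1)_+\psi_M$ has gradient of size $|\psi_M|^2|\nabla\psi_M|$ on the set $\{|\psi_M|>1\}$, and without an a priori $L^\infty$ bound on $\psi_M$ there is no reason this lies in $L^2(0,T;{\cal L}^2)$. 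You cannot carry out the test at the Galerkin level either, since $(|\psi_M|^2-1)_+\psi_M$ is not in the finite-dimensional eigenspace. So the ``key step'' in your outline is not justified with the regularity you have available; smoothing ${\bf A}$ (or invoking a De~Giorgi--Nash--Moser argument with the borderline potential $|{\bf A}|^2\in L^{(3+\delta)/2}$) is needed to close it.

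A second, smaller point: once the test is legitimate, no Gronwall is needed and no ${\bf A}$-dependent error terms survive. If you keep the covariant form and compute
\[
{\rm Re}\Big(\big(\tfrac{i}{\kappa}\nabla+{\bf A}\big)\psi,\,
\big(\tfrac{i}{\kappa}\nabla+{\bf A}\big)\big[\psi^*(|\psi|^2-1)_+\big]\Big),
\]
one term is $\int(|\psi|^2-1)_+\big|\tfrac{i}{\kappa}\nabla\psi+{\bf A}\psi\big|^2\ge 0$, and the remaining piece involves $(\tfrac{i}{\kappa}\nabla\psi+{\bf A}\psi)\psi^*$ paired with the purely imaginary vector $\tfrac{i}{\kappa}\nabla|\psi|^2$, so the ${\bf A}$-contribution drops out upon taking the real part and what remains is again nonnegative. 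Likewise the gauge term $-i\eta\kappa\psi\nabla\cdot{\bf A}$ contributes $0$ to the real part. Hence $\int(|\psi(\cdot,t')|^2-1)_+^2\,\d x\le 0$ directly. Your expanded version with ``terms controllable by $\int w^2$'' and a Gronwall closure obscures this exact cancellation.
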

\begin{proof}$~$
It suffices to construct approximating
solutions which preserve the pointwise estimate
\begin{align}\label{PWEst1}
\mbox{$|\psi|\leq 1$ a.e. in $\Omega\times(0,T)$.}
\end{align}
Since
${\bf H}_{\rm n}({\rm curl},{\rm div}) 
\hookrightarrow {\bf L}^{3+\delta} $,
it follows that ${\bf A}\in L^\infty(0,T;{\bf L}^{3+\delta})$.
Let ${\bf A}_m$, $m=1,2,\cdots$, 
be a sequence of smooth functions which 
converges to ${\bf A}$ in 
${\bf L}^{3+\delta}\cap {\bf H}_{\rm n}({\rm div}) $.
Since ${\bf A}_m$ is smooth for any fixed $m$.
By the theory of parabolic equations \cite{Evans},   
the equation 
\begin{align}\label{RegEqps}
&\eta\frac{\partial \psi_m}{\partial t} 
+ \bigg(\frac{i}{\kappa} \nabla + \mathbf{A}_m\bigg)^{2} \psi_m
 + (|\psi_m|^{2}-1) \psi_m
 -i\eta \kappa \psi_m \nabla\cdot{\bf A}_m = 0 
\end{align}
has a unique weak solution 
$\psi_m\in L^\infty(0,T;H^{1}\cap L^\infty)\cap H^1(0,T;L^2)$
in the sense of \refe{VPDE1}.
Let $(|\psi_m|^2-1)_+$ denote the positive part of $|\psi_m|^2-1$
and integrate this equation against $\psi_m^*(|\psi_m|^2-1)_+$. 
By considering the real part of the result, for any $t'\in(0,T)$ we derive
\begin{align*}
& \int_\Omega
\bigg(\frac{\eta}{4}\big(|\psi_m(x,t')|^2-1\big)_+ ^2\bigg)\d x
 + \int_0^{t'}\int_\Omega (|\psi_m|^{2}-1)^2_+ |\psi_m| ^2\d x\d t\\
&=-\int_0^{t'}{\rm Re}\int_\Omega 
\bigg(\frac{i}{\kappa} \nabla  \psi_m+ \mathbf{A}_m \psi\bigg)
\bigg(-\frac{i}{\kappa} \nabla 
+ \mathbf{A}_m\bigg)[\psi_m^* (|\psi_m|^2-1)_+]\d x\d t\\
&=-\int_0^{t'}\int_\Omega \bigg|\frac{i}{\kappa} 
\nabla  \psi_m+ \mathbf{A}_m \psi\bigg|^2
 (|\psi_m|^2-1)_+ \d x \d t\\
&\quad + \int_0^{t'}{\rm Re}\int_{\{|\psi_m|^2>1\}} 
\bigg(\frac{i}{\kappa} \nabla  \psi_m
+ \mathbf{A}_m \psi\bigg)\psi_m^*\bigg(\frac{i}{\kappa}  
\psi_m\nabla\psi_m^*+\frac{i}{\kappa}\psi_m^*\nabla\psi_m \bigg)\d x\d t\\
&=-\int_0^{t'}\int_\Omega \bigg|\frac{i}{\kappa} 
\nabla  \psi_m+ \mathbf{A}_m \psi_m\bigg|^2
 (|\psi_m|^2-1)_+ \d x\d t\\
&\quad -\int_0^{t'}{\rm Re}\int_{\{|\psi_m|^2>1\}}(|\psi_m|^2|\nabla\psi_m|^2
+ (\psi_m^*)^2\nabla\psi_m\cdot \nabla\psi_m )\d x\d t\\
& \leq 0,
\end{align*}
which implies that $\int_\Omega(|\psi_m(x,t')|^2-1)_+ ^2 \d x
=0$. Thus $|\psi_m|\leq 1$ a.e. in $\Omega\times(0,T)$.

Integrating \refe{RegEqps} against $\psi_m^*$
and considering the real part of the result, we derive that
\begin{align*}
\frac{\d}{\d t}\bigg(\frac{\eta}{2}\|\psi_m\|_{L^2}^2\bigg)
+ \big\|i\kappa^{-1} \nabla  \psi_m+ \mathbf{A}_m \psi_m\big\|_{L^2}^2
+\|\psi_m\|_{L^4}^4
&=\|\psi_m\|_{L^2}^2\leq C, 
\end{align*}
which further implies 
\begin{align}\label{PWEst2}
\|\psi_m\|_{L^2(0,T;{\mathcal H}^1)}\leq C,
\end{align}
where $C$ is independent of $m$. 
From \refe{RegEqps}
we also see that
\begin{align*}
\bigg|\int_0^T\big(\partial_t\psi_m,\varphi\big)\d t\bigg|
&=\bigg|\int_0^T\bigg[-\frac{1}{\kappa^2}\big(\nabla\psi_m,\nabla\varphi\big)
-\frac{i}{\kappa^2}\big(\nabla\psi_m,{\bf A}_m\varphi\big)
+\frac{i}{\kappa^2}\big({\bf A}_m\psi_m,\nabla\varphi\big) \\
&\qquad\quad\,\,\,\,
-\big(|{\bf A}_m|^2\psi_m,\varphi\big) 
-\big( (|\psi_m|^{2}-1) \psi_m,\varphi\big)
 +\big(i\eta \kappa \psi_m \nabla\cdot{\bf A}_m,\varphi\big)\bigg]\d t\bigg|\\
&\leq 
C\|\psi_m\|_{L^2(0,T;{\mathcal H}^1)}
\|\varphi\|_{L^2(0,T;{\mathcal H}^1)} \\
&\quad 
+C\|\psi_m\|_{L^2(0,T;{\mathcal H}^1)}
\|\mathbf{A}_m\|_{L^\infty(0,T;{\bf L}^3)}
\|\varphi\|_{L^2(0,T;{\mathcal L}^6)} \\
&\quad 
+C\|\psi_m\|_{L^2(0,T;{\mathcal L}^6)}
\|\mathbf{A}_m\|_{L^\infty(0,T;{\bf L}^3)}
\|\varphi\|_{L^2(0,T;{\mathcal H}^1)} \\
&\quad
+\|\mathbf{A}_m\|_{L^\infty(0,T;{\bf L}^3)}^2
\|\psi_m\|_{L^2(0,T;{\mathcal L}^6)}
\|\varphi\|_{L^2(0,T;{\mathcal L}^6)} \\
&\quad
+C\|(|\psi_m|^{2}-1) \psi_m\|_{L^\infty(0,T;{\cal L}^\infty)}
\|\varphi\|_{L^1(0,T;{\mathcal L}^1)} \\
&\quad
+C\|\nabla\cdot{\bf A}_m\|_{L^\infty(0,T;L^2)}
\|\varphi\|_{L^1(0,T;{\cal L}^2)} \\
&\leq C\|\varphi\|_{L^2(0,T;{\mathcal H}^1)} ,
\end{align*}
which implies (via a duality argument)
\begin{align}\label{PWEst3}
\|\partial_t\psi_m\|_{L^2(0,T;({\mathcal H}^1)')}
\leq C ,
\end{align}
where $C$ is independent of $m$.
With the estimates \refe{PWEst1}-\refe{PWEst3}, 
it is easy to prove that 
there exists 
$\psi\in L^\infty(0,T;{\mathcal L}^\infty)\cap 
L^2(0,T;{\mathcal H}^1)\cap H^1(0,T;({\mathcal H}^1)')$ 
and a subsequence of $\psi_m$, $m=1,2,\cdots$,
which converges to $\psi$ weakly in
$L^2(0,T;{\mathcal H}^1)\cap H^1(0,T;({\mathcal H}^1)')$,
strongly in $L^p(0,T;{\mathcal L}^p)$ for any $1<p<\infty$,
and convergence is pointwise a.e. in $\Omega\times(0,T)$.
It is easy to see that $\psi$ is
a weak solution of \refe{PDE1}, 
with $|\psi|\leq 1$ a.e. in $\Omega\times(0,T)$.

Uniqueness of the weak solution can be proved easily
based on the regularity of
$\psi$. 
\end{proof}

\subsection{Construction of approximating solutions}
In this subsection, we construct approximating solutions
of \refe{PDE1}-\refe{init} in semi-finite dimensional spaces.

Let $M:{\bf H}_{\rm n}({\rm curl},{\rm div})
\rightarrow ({\bf H}_{\rm n}({\rm curl},{\rm div}))'$ be defined by
$$
(M{\bf u},{\bf v})=({\bf u},{\bf v})+(\nabla\times {\bf u},\nabla\times {\bf v})
+(\nabla\cdot{\bf u},\nabla\cdot{\bf v}) ,
\quad\mbox{for}~ {\bf u},{\bf v}
\in {\bf H}_{\rm n}({\rm curl},{\rm div}) \, .
$$
It is easy to see that the bilinear form on the right-hand side 
is coercive on ${\bf H}_{\rm n}({\rm curl},{\rm div})$, 
which is compactly embedded into ${\bf L}^2$
(see \cite{Webner}).
In this case,
the spectrum of the operator $M$ consists of 
eigenvalues $\lambda_1,\lambda_2,\cdots,\lambda_N,\cdots$, 
which tend to infinity, and the 
corresponding eigenvectors ${\bf a}_1,{\bf a}_2,{\bf a}_3,\cdots$ 
form a Hilbert basis of ${\bf H}_{\rm n}({\rm curl},{\rm div})$
(see Theorem 2.37 of \cite{McLean00}).
Let ${\bf X}_N={\rm span}\{{\bf a}_1,{\bf a}_2,\cdots,{\bf a}_N\}$,
which is a finite dimensional subspace of 
${\bf H}_{\rm n}({\rm curl},{\rm div})$,
equipped with the norm of ${\bf H}_{\rm n}({\rm curl},{\rm div})$.
We look for $\Psi_N(t)\in {\mathcal H}^1$ and
${\bf \Lambda}_N(t)\in {\bf X}_N$
such that
\begin{align}
&\bigg(\eta\frac{\partial \Psi_N}{\partial t} ,\varphi\bigg)
+ \bigg(\bigg(\frac{i}{\kappa} \nabla +{\bf \Lambda}_N\bigg)  \Psi_N,
\bigg(\frac{i}{\kappa} \nabla + {\bf \Lambda}_N\bigg)\varphi\bigg)\nn\\
&\qquad\quad +\bigg( (|\Psi_N|^{2}-1) \Psi_N-i\eta\kappa
 \Psi_N \nabla\cdot{\bf\Lambda}_N,\varphi\bigg) = 0,
\label{DVPDE1}\\[8pt]
&\bigg(\frac{\partial {\bf \Lambda}_N}{\partial t} ,{\bf a}\bigg)
+ \big(\nabla\times{\bf \Lambda}_N,\nabla\times{\bf a}\big)
+\big(\nabla\cdot{\bf \Lambda}_N,\nabla\cdot{\bf a}\big)  \nn\\
&\qquad\quad 
+ \bigg( {\rm Re}\bigg[\Psi_N^*\bigg(\frac{i}{\kappa} \nabla 
+{\bf \Lambda}_N\bigg) \Psi_N\bigg],{\bf a}\bigg) =
 \big({\bf H} , \nabla\times {\bf a}\big),
\label{DVPDE2}
\end{align}
for any $\varphi\in {\mathcal H}^1$ and ${\bf a}\in {\bf X}_N$ 
at any $t\in (0,T)$,
with the initial conditions $\Psi(0) = \psi_{0}\in {\cal H}^1$ and  
${\bf\Lambda}(0) = \Pi_N{\bf A}_{0}\in{\bf H}_n({\rm curl,div})$, where 
$\Pi_N$ is the 
projection of ${\bf H}_{\rm n}({\rm curl},{\rm div})$ onto its subspace 
${\bf X}_N$. We have the following lemma
concerning the approximating solution $(\Psi_N,{\bf\Lambda}_N)$.

\begin{lemma}\label{ExAppSol}
For a given positive integer $N$,
the system \refe{DVPDE1}-\refe{DVPDE2} has a unique solution 
\begin{align*}
&\Psi_N\in L^\infty(0,T;{\mathcal H}^1)\cap H^1(0,T;{\mathcal L}^2)
\cap L^\infty(\Omega\times(0,T)) ,
\quad |\Psi_N|\leq 1\,\,\, a.e.\,\,\, in \,\,\,\Omega\times(0,T),\\
&{\bf\Lambda}_N\in W^{1,\infty}(0,T;{\bf X}_N)
\end{align*}
Moreover, we have the uniform estimate:
\begin{align}\label{unifEstAS}
&\|\Psi_N\|_{L^\infty(0,T;{\mathcal  H}^1)}
+\|\partial_t\Psi_N\|_{L^2(0,T;{\mathcal  L}^2)}
+\|{\bf\Lambda}_N\|_{L^\infty(0,T;{\bf H}_{\rm n}({\rm curl},{\rm div}))}
+\|\partial_t{\bf\Lambda}_N\|_{L^2(0,T;L^2)}
\leq C,
\end{align}
where the constant $C$ does not depend on $N$.
\end{lemma}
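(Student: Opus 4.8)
\emph{Reduction to a coupled PDE--ODE system.} The plan is to regard \refe{DVPDE1}--\refe{DVPDE2} as a semi-Galerkin system: the magnetic potential is discretised in ${\bf X}_N$, but the order parameter is kept in the full space ${\mathcal H}^1$. The advantage is that, for a given ${\bf\Lambda}_N\in C([0,t^*];{\bf X}_N)$, equation \refe{DVPDE1} is exactly of the form treated in Lemma \ref{UnBDPsi} (with ${\bf A}={\bf\Lambda}_N\in L^\infty(0,t^*;{\bf H}_{\rm n}({\rm curl},{\rm div}))$), which provides a unique $\Psi_N\in L^\infty(0,t^*;{\mathcal L}^\infty)\cap L^2(0,t^*;{\mathcal H}^1)\cap H^1(0,t^*;({\mathcal H}^1)')$ together with the pointwise bound $|\Psi_N|\le1$. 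Normalising the eigenvectors so that $\{{\bf a}_k\}$ is orthonormal in ${\bf L}^2$ (hence orthogonal in ${\bf H}_{\rm n}({\rm curl},{\rm div})$, with $(\nabla\times{\bf a}_k,\nabla\times{\bf a}_j)+(\nabla\cdot{\bf a}_k,\nabla\cdot{\bf a}_j)=(\lambda_k-1)\delta_{kj}$), and writing ${\bf\Lambda}_N(t)=\sum_{k=1}^N\xi_k(t){\bf a}_k$, equation \refe{DVPDE2} tested against ${\bf a}_j$ becomes the ODE system
$$\xi_j'+(\lambda_j-1)\xi_j=-\Big({\rm Re}\big[\Psi_N^*(\tfrac{i}{\kappa}\nabla+{\bf\Lambda}_N)\Psi_N\big],{\bf a}_j\Big)+\big({\bf H},\nabla\times{\bf a}_j\big),\qquad \xi_j(0)=(\Pi_N{\bf A}_0,{\bf a}_j),$$
whose right-hand side depends (nonlocally in time) on $\xi$ through $\Psi_N$. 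Composing ${\bf\Lambda}_N\mapsto\Psi_N$ (Lemma \ref{UnBDPsi}) with $\Psi_N\mapsto{\bf\Lambda}_N$ (this ODE) defines a map ${\mathcal F}$ on $C([0,t^*];\R^N)$ whose fixed points are local solutions of \refe{DVPDE1}--\refe{DVPDE2}.

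\emph{Local existence and uniqueness for fixed $N$.} Subtracting two copies of \refe{DVPDE1} with potentials ${\bf\Lambda}_N^{(1)},{\bf\Lambda}_N^{(2)}$, testing with $\Psi_N^{(1)}-\Psi_N^{(2)}$, and using $|\Psi_N^{(j)}|\le1$, the embedding ${\bf X}_N\hookrightarrow{\bf L}^\infty$ (with an $N$-dependent constant, harmless here) and Gronwall's inequality, one shows that ${\bf\Lambda}_N\mapsto\Psi_N$ is Lipschitz from $C([0,t^*];{\bf X}_N)$ into $C([0,t^*];{\mathcal L}^2)\cap L^2(0,t^*;{\mathcal H}^1)$; moreover, because $|\Psi_N|\le1$, the right-hand side of the ODE above is $O(\sqrt{t^*})$ in $L^1(0,t^*)$ on bounded balls. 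Hence ${\mathcal F}$ maps a large ball of $C([0,t^*];\R^N)$ into itself, its image is equi-Lipschitz in $t$ (so precompact by Arzel\`a--Ascoli), and ${\mathcal F}$ is continuous, so Schauder's theorem gives a fixed point; making the estimates quantitative yields a contraction on a short interval, hence uniqueness there. The regularity $\Psi_N\in L^\infty(0,t^*;{\mathcal H}^1)\cap H^1(0,t^*;{\mathcal L}^2)$, $\Psi_N\in L^\infty(\Omega\times(0,t^*))$ and ${\bf\Lambda}_N\in W^{1,\infty}(0,t^*;{\bf X}_N)$ is then recovered \emph{a posteriori} by a short bootstrap based on the energy identities below and the ODE, using ${\bf H}\in L^\infty(0,T;{\bf L}^2)$.

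\emph{The uniform estimate \refe{unifEstAS} --- the main obstacle.} This is the step I expect to be the hardest, since the constant must be independent of $N$ while $\Omega$ is only a curved polyhedron. I would argue in two layers. \emph{Layer 1:} testing \refe{DVPDE1} with $\Psi_N$ and taking the real part kills the gauge term (as ${\rm Re}(i\eta\kappa\,|\Psi_N|^2\,\nabla\cdot{\bf\Lambda}_N)=0$) and, using $|\Psi_N|\le1$, yields $\|(\tfrac{i}{\kappa}\nabla+{\bf\Lambda}_N)\Psi_N\|_{L^2(0,T;{\bf L}^2)}\le C$ and then $\|\nabla\Psi_N\|_{L^2(0,T;{\bf L}^2)}^2\le C\big(1+\|{\bf\Lambda}_N\|_{L^2(0,T;{\bf L}^2)}^2\big)$; testing \refe{DVPDE2} with ${\bf\Lambda}_N$, bounding the coupling term ${\rm Re}(\tfrac{i}{\kappa}\Psi_N^*\nabla\Psi_N,{\bf\Lambda}_N)$ by $\epsilon\|\nabla\Psi_N\|^2+C\|{\bf\Lambda}_N\|^2$ and the forcing by $\epsilon\|\nabla\times{\bf\Lambda}_N\|^2+C\|{\bf H}\|_{{\bf L}^2}^2$, and using Gronwall's inequality, gives $\|{\bf\Lambda}_N\|_{L^\infty(0,T;{\bf L}^2)}+\|{\bf\Lambda}_N\|_{L^2(0,T;{\bf H}_{\rm n}({\rm curl},{\rm div}))}+\|\Psi_N\|_{L^2(0,T;{\mathcal H}^1)}\le C$. \emph{Layer 2:} testing \refe{DVPDE1} with $\partial_t\Psi_N$ and \refe{DVPDE2} with $\partial_t{\bf\Lambda}_N$, taking real parts and adding, the two occurrences of $({\rm Re}[\Psi_N^*(\tfrac{i}{\kappa}\nabla+{\bf\Lambda}_N)\Psi_N],\partial_t{\bf\Lambda}_N)$ cancel --- this is the gauge-invariant structure --- leaving
$$\eta\|\partial_t\Psi_N\|_{{\mathcal L}^2}^2+\|\partial_t{\bf\Lambda}_N\|_{{\bf L}^2}^2+\frac{\d}{\d t}\,{\mathcal G}(\Psi_N,{\bf\Lambda}_N)={\rm Re}\big(i\eta\kappa\,\Psi_N\,\nabla\cdot{\bf\Lambda}_N,\partial_t\Psi_N\big)+\big({\bf H},\nabla\times\partial_t{\bf\Lambda}_N\big),$$
with ${\mathcal G}=\tfrac12\|(\tfrac{i}{\kappa}\nabla+{\bf\Lambda}_N)\Psi_N\|_{{\bf L}^2}^2+\tfrac14\||\Psi_N|^2-1\|_{L^2}^2+\tfrac12\|\nabla\times{\bf\Lambda}_N\|_{{\bf L}^2}^2+\tfrac12\|\nabla\cdot{\bf\Lambda}_N\|_{L^2}^2$. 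The first term on the right is absorbed using $|\Psi_N|\le1$ and the Layer-1 bound on $\|\nabla\cdot{\bf\Lambda}_N\|_{L^2(0,T;L^2)}$; the term $({\bf H},\nabla\times\partial_t{\bf\Lambda}_N)$ is integrated by parts in time, its endpoint contribution $({\bf H}(t),\nabla\times{\bf\Lambda}_N(t))$ being absorbed into $\tfrac12\|\nabla\times{\bf\Lambda}_N(t)\|_{{\bf L}^2}^2$ and the rest controlled by $\|{\bf H}\|_{L^\infty(0,T;{\bf L}^2)}$ and $\|{\bf H}\|_{L^2(0,T;{\bf H}({\rm curl}))}$; and ${\mathcal G}(\psi_0,\Pi_N{\bf A}_0)$ is bounded independently of $N$ since $\psi_0\in{\mathcal H}^1$, $|\psi_0|\le1$, ${\bf A}_0\in{\bf H}_{\rm n}({\rm curl},{\rm div})$ and $\|\Pi_N{\bf A}_0\|_{{\bf H}_{\rm n}({\rm curl},{\rm div})}\le\|{\bf A}_0\|_{{\bf H}_{\rm n}({\rm curl},{\rm div})}$. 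Integrating in time and applying Gronwall's inequality then gives \refe{unifEstAS} with $C$ independent of $N$.

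\emph{Globalisation and uniqueness.} Because \refe{unifEstAS} is a bound on all of $[0,T]$, the coefficient vector $\xi(t)$ stays bounded, so the local solution extends to $[0,T]$ by the usual continuation argument, which proves existence. Uniqueness of $(\Psi_N,{\bf\Lambda}_N)$ for each fixed $N$ follows from a routine energy estimate for the difference of two solutions together with Gronwall's inequality, using $|\Psi_N|\le1$, $\Psi_N\in L^\infty(0,T;{\mathcal H}^1)$ and ${\bf\Lambda}_N\in W^{1,\infty}(0,T;{\bf X}_N)$ to control the cubic and coupling terms.
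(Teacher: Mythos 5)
Your proposal is correct and follows essentially the same route as the paper: the same semi-Galerkin splitting (the order parameter solved in the full space ${\mathcal H}^1$ via Lemma \ref{UnBDPsi}, the magnetic potential reduced to an ODE system in ${\bf X}_N$), and the same two-layer energy argument for \refe{unifEstAS}, including the key cancellation of the coupling terms $({\rm Re}[\Psi_N^*(\tfrac{i}{\kappa}\nabla+{\bf\Lambda}_N)\Psi_N],\partial_t{\bf\Lambda}_N)$ when testing with $\partial_t\Psi_N$ and $\partial_t{\bf\Lambda}_N$, and the absorption of the remaining gauge term using $|\Psi_N|\le 1$. The one methodological difference is the fixed-point machinery: the paper applies Schaefer's theorem globally on $L^\infty(0,T;{\bf X}_N)$ (continuity and compactness of the composed map ${\cal N}$, plus boundedness of the set ${\bf V}_N$ obtained from the first-layer estimate and the equivalence of norms on the finite-dimensional ${\bf X}_N$), whereas you use a local-in-time Schauder/contraction argument followed by continuation via the a priori bound; both are valid, yours yields uniqueness directly from the contraction while the paper's avoids the continuation step. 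One point to flag, which is shared with the paper rather than introduced by you: in the second-layer identity, whether one absorbs ${\bf H}$ into the energy as $|\nabla\times{\bf\Lambda}_N-{\bf H}|^2$ (as the paper does) or integrates $({\bf H},\nabla\times\partial_t{\bf\Lambda}_N)$ by parts in time (as you do), a term involving $\partial_t{\bf H}$ appears that is not controlled by the stated hypothesis ${\bf H}\in L^\infty(0,T;{\bf L}^2)\cap L^2(0,T;{\bf H}({\rm curl}))$; either an additional regularity assumption on $\partial_t{\bf H}$ or a time-independent ${\bf H}$ is implicitly being used at this step.
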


To prove Lemma \ref{SchLem}, we apply 
Schaefer's fixed point theorem \cite{Evans}:

\begin{lemma}\label{SchLem}
$\,\,${\bf(Schaefer's fixed point theorem)}$\,\,\,$
Let ${\cal N} : L^\infty(0,T;{\bf X}_N) \rightarrow L^\infty(0,T;{\bf X}_N)$ be a 
continuous and compact mapping such that the set
\begin{align}\label{SetVN}
{\bf V}_N:=\{{\bf \Lambda}_N^0\in L^\infty(0,T;{\bf X}_N):
\exists \, s\in[0,1]\mbox{ satisfying }
{\bf \Lambda}_N^0= s {\cal N}{\bf \Lambda}_N^0\}
\end{align}
is bounded in $L^\infty(0,T;{\bf X}_N)$. 
Then the mapping ${\cal N}$ has
a fixed point in $L^\infty(0,T;{\bf X}_N)$: 
there exists ${\bf \Lambda}\in L^\infty(0,T;{\bf X}_N)$ 
satisfying ${\bf \Lambda}=  {\cal N}{\bf \Lambda}$.
\end{lemma}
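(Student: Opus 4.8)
The plan is to deduce Lemma \ref{SchLem} from Schauder's fixed point theorem together with a radial truncation argument; this is the standard route, and since the paper invokes the statement as a black box via \cite{Evans}, I would simply record the argument. Write $Y:=L^\infty(0,T;{\bf X}_N)$. Because ${\bf X}_N$ is finite-dimensional, $Y$ is a Banach space, and every closed ball of $Y$ is a nonempty, closed, bounded, convex set, on which Schauder's theorem applies to continuous self-maps whose image is relatively compact.

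First I would fix, using the hypothesis, a constant $M>0$ with $\|{\bf \Lambda}_N^0\|_Y\le M$ for all ${\bf \Lambda}_N^0\in {\bf V}_N$, and set $R:=M+1$. Let $B_R$ be the closed ball of radius $R$ centered at the origin in $Y$, and let $r:Y\to B_R$ be the radial retraction: $r(y)=y$ when $\|y\|_Y\le R$ and $r(y)=Ry/\|y\|_Y$ otherwise. The map $r$ is continuous, so $r\circ{\cal N}$ restricts to a continuous self-map of $B_R$; moreover ${\cal N}$ sends the bounded set $B_R$ to a relatively compact subset of $Y$, and, $r$ being continuous, $r({\cal N}(B_R))$ is then relatively compact in $Y$. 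Schauder's fixed point theorem therefore produces ${\bf \Lambda}\in B_R$ with ${\bf \Lambda}=r({\cal N}{\bf \Lambda})$.

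It remains to check that the truncation is inactive at ${\bf \Lambda}$. If $\|{\cal N}{\bf \Lambda}\|_Y>R$, then ${\bf \Lambda}=r({\cal N}{\bf \Lambda})=s\,{\cal N}{\bf \Lambda}$ with $s:=R/\|{\cal N}{\bf \Lambda}\|_Y\in(0,1)$, so ${\bf \Lambda}\in{\bf V}_N$ and hence $\|{\bf \Lambda}\|_Y\le M$; but in this case $\|{\bf \Lambda}\|_Y=\|r({\cal N}{\bf \Lambda})\|_Y=R=M+1$, a contradiction. Hence $\|{\cal N}{\bf \Lambda}\|_Y\le R$, so $r({\cal N}{\bf \Lambda})={\cal N}{\bf \Lambda}$ and ${\bf \Lambda}={\cal N}{\bf \Lambda}$, which is the asserted fixed point.

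The abstract statement carries no real obstacle beyond correctly matching the hypotheses of Schauder's theorem for $r\circ{\cal N}$ on $B_R$ (in particular the relative compactness of its image); the substantive work lies instead in the later uses of this lemma, namely in showing that the solution operator built from \refe{DVPDE1}-\refe{DVPDE2} is continuous and compact on $Y$ and that the associated set ${\bf V}_N$ is bounded, which is where the a priori energy estimates of Lemma \ref{ExAppSol} enter.
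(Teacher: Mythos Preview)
Your proof is correct and is the standard Schauder-plus-retraction argument for Schaefer's theorem. The paper itself does not prove this lemma at all: it merely states it and cites \cite{Evans} as the source, so there is no ``paper's own proof'' to compare against beyond noting that your argument is essentially the one found in that reference.
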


\noindent{\it Proof of Lemma \ref{ExAppSol}.}$\quad$ 
For a given ${\bf \Lambda}_N^0\in L^\infty(0,T;{\bf X}_N)$, we 
define $\Psi_N\in L^\infty(0,T;L^\infty)\cap L^2(0,T;{\cal H}^1)$
as the unique weak solution of the equation
\begin{align}
&\bigg(\eta\frac{\partial \Psi_N}{\partial t} ,\varphi\bigg)
+ \bigg(\bigg(\frac{i}{\kappa} \nabla +{\bf \Lambda}_N^0\bigg)  \Psi_N,
\bigg(\frac{i}{\kappa} \nabla + {\bf \Lambda}_N^0\bigg)\varphi\bigg)\nn\\
&\qquad\quad +\bigg( (|\Psi_N|^{2}-1) \Psi_N-i\eta\kappa
 \Psi_N \nabla\cdot{\bf\Lambda}_N^0,\varphi\bigg) = 0 ,
 \qquad\forall\,\varphi\in L^2(0,T;{\cal H}^1).
\label{DVPDE01}
\end{align}
Existence and uniqueness of the solution $\Psi_N$
follow Lemma \ref{UnBDPsi}.
Then we define ${\bf \Lambda}_N $ as the solution of 
\begin{align}
&\bigg(\frac{\partial {\bf \Lambda}_N}{\partial t} ,{\bf a}\bigg)
+ \big(\nabla\times{\bf \Lambda}_N,\nabla\times{\bf a}\big)
+\big(\nabla\cdot{\bf \Lambda}_N,\nabla\cdot{\bf a}\big)  \nn\\
&\qquad\quad 
+ \bigg( {\rm Re}\bigg[\Psi_N^*\bigg(\frac{i}{\kappa} \nabla 
+{\bf \Lambda}_N^0\bigg) \Psi_N\bigg],{\bf a}\bigg) =
 \big({\bf H}, \nabla\times {\bf a}\big),
\label{DVPDE02}
\end{align}
Since ${\bf X}_N$ is a finite dimensional space,
the equation above can be reduced to an ordinary differential equation.
Therefore, existence and uniqueness of a solution 
${\bf \Lambda}_N\in W^{1,\infty}(0,T;{\bf X}_N)$ 
are obvious.
We denote the mapping from 
${\bf \Lambda}_N^0\in L^\infty(0,T;{\bf X}_N)$
to $\Psi_N\in L^\infty(0,T;{\cal L}^\infty)\cap L^2(0,T;{\cal H}^1)$
by ${\cal N}_1$, and denote the mapping from 
$\Psi_N\in L^\infty(0,T;{\cal L}^\infty)\cap L^2(0,T;{\cal H}^1)$
to ${\bf \Lambda}_N\in L^\infty(0,T;{\bf X}_N)$ by ${\cal N}_2$,
and then define ${\cal N}:={\cal N}_1{\cal N}_2$.
We shall prove that the mapping ${\cal N}$
satisfies the conditions of Lemma \ref{SchLem} 
and thus has a fixed point in $L^\infty(0,T;{\bf X}_N)$.

Firstly, we prove that 
the mapping ${\cal N}:L^\infty(0,T;{\bf X}_N)\rightarrow 
L^\infty(0,T;{\bf X}_N)$ is 
continuous and compact.
To prove the continuity of the mapping, we
let $\Psi_N={\cal N}_1{\bf \Lambda}_N^0$,  
$\widetilde \Psi_N={\cal N}_1\widetilde{\bf \Lambda}_N^0$,
${\bf\Lambda}_N={\cal N}{\bf \Lambda}_N^0$
and $\widetilde{\bf\Lambda}_N={\cal N}\widetilde{\bf \Lambda}_N^0$,
and assume that
${\bf \Lambda}_N^0$ and $\widetilde{\bf \Lambda}_N^0$
are bounded in $L^\infty(0,T;{\bf X}_N)$:
\begin{align}\label{BDLL}
\|{\bf \Lambda}_N^0\|_{L^\infty(0,T;{\bf X}_N)}+
\|\widetilde{\bf \Lambda}_N^0\|_{L^\infty(0,T;{\bf X}_N)}\leq K,
\end{align}
where $K$ is some positive constant.
From Lemma \ref{UnBDPsi} we see that 
the inequality above implies 
\begin{align}
&\|\Psi_N\|_{L^2(0,T;{\cal H}^1)}
+\|\widetilde\Psi_N\|_{L^2(0,T;{\cal H}^1)}\leq C_K, \label{BDPSi1}\\
&|\Psi_N|\leq 1\quad\mbox{and}\quad |\widetilde\Psi_N|\leq 1,\quad
\mbox{a.e. in }\Omega\times(0,T) . \label{BDPSi2}
\end{align}
If we define $e_N=\widetilde\Psi_N-\Psi_N$ and
${\bf E}^0_N=\widetilde{\bf \Lambda}_N^0-{\bf \Lambda}_N^0$, 
then \refe{DVPDE01} implies
\begin{align}
&\int_0^T\Big[\big(\eta\partial_t e_N  ,\varphi\big)
+ \frac{1}{\kappa^2}\big(\nabla  e_N, \nabla\varphi\big) 
+ \big(|\widetilde{\bf \Lambda}_N^0|^2   e_N,  \varphi\big) \Big]\d t\nn\\
& =\int_0^T\Big[-\frac{i}{\kappa}\big(\widetilde{\bf \Lambda}_N^0
\cdot\nabla e_N ,\varphi\big)
-\frac{i}{\kappa}\big({\bf E}_N^0\cdot\nabla  \Psi_N ,\varphi\big)
+\frac{i}{\kappa}\big(e_N \widetilde{\bf \Lambda}_N^0,\nabla\varphi\big)
+\frac{i}{\kappa}\big( \Psi_N {\bf E}^0_N,\nabla\varphi\big)  \nn\\
&\quad - \big((|\widetilde{\bf \Lambda}_N^0|^2
 -|{\bf \Lambda}_N^0|^2) \Psi_N,  \varphi\big)
 -\big( (|\widetilde \Psi_N|^{2}-1)\widetilde \Psi_N
 -(|\Psi_N|^{2}-1) \Psi_N,\varphi\big)\Big]\d t\nn\\
&\quad -\int_0^T\big(i\eta\kappa\widetilde \Psi_N\nabla\cdot{\bf E}^0_N
+i\eta\kappa e_N\nabla\cdot{\bf \Lambda}_N^0,\varphi\big)\d t .
\nn
\end{align}
Substituting $\varphi(x,t)=e_N(x,t) 1_{(0,t')}(t)$
into the equation above and considering
the real part, we obtain
\begin{align*}
& \frac{\eta}{2} \|e_N(\cdot,t') \|_{{\cal L}^2}^2 
+ \int_0^{t'}\Big(\frac{1}{\kappa^2}\|\nabla  e_N\|_{{\cal L}^2}^2
+  \|\widetilde{\bf \Lambda}_N^0    e_N\|_{{\cal L}^2}^2\Big)\d t \\
&\leq 
\int_0^{t'}\Big(C\|\widetilde{\bf \Lambda}_N^0\|_{{\bf L}^{3+\delta}}
\|\nabla e_N\|_{{\cal L}^2}
 \|e_N\|_{{\cal L}^{6-4\delta/(1+\delta)}}
+C\|{\bf E}_N^0\|_{{\bf L}^{3+\delta}}\|\nabla \Psi_N\|_{{\cal L}^2}
\|e_N\|_{{\cal L}^{6-4\delta/(1+\delta)}}\\
&\quad
+C\| e_N\|_{{\cal L}^{6-4\delta/(1+\delta)}}
 \|\widetilde{\bf \Lambda}_N^0\|_{{\bf L}^{3+\delta}}
\|\nabla e_N\|_{{\cal L}^2}  \\
&\quad +C\| \Psi_N\|_{{\cal L}^\infty}\| {\bf E}^0_N\|_{{\bf L}^2}
\|\nabla e_N\|_{{\cal L}^2}
+C(\|\widetilde{\bf \Lambda}_N^0\|_{{\bf L}^{3+\delta}}
+\|{\bf \Lambda}_N^0\|_{{\bf L}^{3+\delta}})
\|{\bf E}_N^0\|_{{\bf L}^2} \| \Psi_N\|_{{\cal L}^\infty}
\|e_N\|_{{\cal L}^{6-4\delta/(1+\delta)}} \\
&\quad 
+C\| e_N\|_{{\cal L}^2}^2
+C\| \widetilde\Psi_N\|_{{\cal L}^\infty}\|\nabla\cdot{\bf E}_N^0\|_{L^2}
\|e_N\|_{{\cal L}^2}\Big)\d t\\
&\leq \int_0^{t'}\Big(CK\|\nabla e_N\|_{{\cal L}^2}
 (C_\epsilon \|e_N\|_{{\cal L}^2}+\epsilon\|\nabla e_N\|_{{\cal L}^2})\\
&\quad 
 +C\|\nabla \Psi_N\|_{{\cal L}^2}\|{\bf E}_N^0\|_{{\bf H}_{\rm n}({\rm curl},{\rm div})}
 (C_\epsilon \|e_N\|_{{\cal L}^2}+\epsilon\|\nabla e_N\|_{{\cal L}^2})\\
&\quad
+CK\|\nabla e_N\|_{{\cal L}^2}(C_\epsilon \|e_N\|_{{\cal L}^2}
+\epsilon\|\nabla e_N\|_{{\cal L}^2}) 
+C\| {\bf E}_N^0\|_{L^2}\|\nabla e_N\|_{{\cal L}^2}\\
&\quad
+CK\|{\bf E}_N^0\|_{L^2}(C_\epsilon \|e_N\|_{{\cal L}^2}+\epsilon\|\nabla e_N\|_{L^2})  +C\| e_N\|_{L^2}^2
+C\|\nabla\cdot {\bf E}_N^0\|_{{\bf L}^2}\| e_N\|_{{\cal L}^2} \Big)\d t\\  
&\leq \int_0^{t'}\Big((1+CK)\epsilon\|\nabla e_N\|_{{\cal L}^2}^2
+C_\epsilon\|e_N\|_{{\cal L}^2}^2
+
(C_{\epsilon,K}+C_{\epsilon,K}\|\nabla \Psi_N\|_{{\cal L}^2}^2)
\|{\bf E}_N^0\|_{{\bf H}_{\rm n}({\rm curl},{\rm div})}^2\Big)\d t ,
\end{align*}
which holds for any $\epsilon\in(0,1)$.
Since $e_N(x,0)=0$,
by choosing $\epsilon<1/(2\kappa^2+2CK\kappa^2)$ in
the inequality above, 
applying Gronwall's inequality and
using \refe{BDPSi1}, we derive 
\begin{align}\label{ErreN}
& \|e_N\|_{L^\infty(0,T;{\cal L}^2)}^2
+\|e_N\|_{L^2(0,T;{\cal H}^1)}^2 \nn\\
&\leq e^{C_KT}
\bigg(\int_0^T(C_{K}+C_{K}\|\nabla \Psi_N\|_{{\cal L}^2}^2)
\d t \bigg)\|{\bf E}_N^0\|_{L^\infty(0,T;{\bf H}_n({\rm curl,div}))}^2 \nn\\
&\leq 
C_K\|{\bf E}_N^0\|_{L^\infty(0,T;{\bf H}_n({\rm curl,div}))}^2 .
\end{align}

If we define
${\bf E}_N=\widetilde{\bf \Lambda}_N-{\bf \Lambda}_N$, 
then \refe{DVPDE02} implies
\begin{align}
&\int_0^T\Big[\big(\partial_t{\bf E}_N ,{\bf a}\big)
+ \big(\nabla\times{\bf E}_N,\nabla\times{\bf a}\big)
+\big(\nabla\cdot{\bf E}_N,\nabla\cdot{\bf a}\big) \Big]\d t\nn\\
& =-\int_0^T{\rm Re} \bigg( 
\frac{i}{\kappa}(\widetilde\Psi_N^*\nabla\widetilde\Psi_N
-\Psi_N^*\nabla\Psi_N)
+  \widetilde{\bf \Lambda}^0_N(|\widetilde\Psi_N|^2-|\Psi_N|^2)
+|\Psi_N|^2 {\bf E}_N\, ,\, {\bf a}\bigg) \d t .
\end{align}
Since ${\bf X}_N$ is a finite dimensional space,
any two norms on ${\bf X}_N$ 
are equivalent. This implies that
$\|{\bf E}_N\|_{{\bf H}_{\rm n}({\rm curl},{\rm div})}
\leq C_N\|{\bf E}_N\|_{{\bf L}^2}$.
Substituting ${\bf a}(x,t)={\bf E}_N(x,t)1_{(0,t')}(t)$
into the equation above, we obtain
\begin{align*}
& \frac{1}{2}\|{\bf E}_N(\cdot,t')\|_{L^2}^2 
+\int_0^{t'}\Big(\|\nabla\times{\bf E}_N \|_{{\bf L}^2}^2
+\|\nabla\cdot{\bf E}_N \|_{{\bf L}^2}^2 \Big)\d t\\
&\leq \int_0^{t'}\Big(C \| e_N\|_{{\cal L}^{6-4\delta/(1+\delta)}}
\|\nabla \widetilde\Psi_N\|_{{\cal L}^2}\| {\bf E}_N\|_{{\bf L}^{3+\delta}}
+C\|\nabla  e_N \|_{{\cal L}^2} \| {\bf E}_N\|_{{\bf L}^2} \\
&\qquad\qquad
+ C\|\widetilde{\bf \Lambda}^0_N\|_{{\bf L}^{3+\delta}}
\|e_N\|_{{\cal L}^{6-4\delta/(1+\delta)}}\|{\bf E}_N\|_{{\bf L}^2}
+\|{\bf E}_N\|_{{\bf L}^2}^2\Big)\d t\\
&\leq C \| e_N\|_{L^2(0,T;{\cal H}^1)} 
\|\widetilde\Psi_N\|_{L^2(0,T;{\cal H}^1)} 
\| {\bf E}_N\|_{L^\infty(0,T;{\bf H}_{\rm n}({\rm curl},{\rm div}))}
+\| e_N \|_{L^2(0,T;{\cal H}^1)} \| {\bf E}_N\|_{L^2(0,T;{\bf L}^2)} \\
&\quad 
+\|\widetilde{\bf \Lambda}^0_N\|_{L^\infty(0,T;{\bf L}^{3+\delta})}
 \| e_N\|_{L^2(0,T;{\cal H}^1)} \|{\bf E}_N\|_{L^2(0,T;{\cal L}^2)}
 +\|{\bf E}_N\|_{L^2(0,T;{\cal L}^2)}^2 \\
&\leq
C_NC_K\|  {\bf E}^0_N\|_{L^\infty(0,T;{\bf H}_{\rm n}({\rm curl},{\rm div}))} 
\| {\bf E}_N\|_{L^\infty(0,T;{\bf L}^2)}
+C\|  {\bf E}^0_N\|_{L^\infty(0,T;{\bf H}_{\rm n}({\rm curl},{\rm div}))}
\| {\bf E}_N\|_{L^2(0,T;{\bf L}^2)} \\
&\quad 
+CK\|  {\bf E}^0_N\|_{L^\infty(0,T;{\bf H}_{\rm n}({\rm curl},{\rm div}))}
\| {\bf E}_N\|_{L^2(0,T;{\bf L}^2)}
+\|{\bf E}_N\|_{L^2(0,T;{\cal L}^2)}^2 \\
&\leq C_NC_KC_\epsilon
\|  {\bf E}^0_N\|_{L^\infty(0,T;{\bf H}_{\rm n}({\rm curl},{\rm div}))}^2
+\epsilon \|  {\bf E} _N\|_{L^\infty(0,T;{\bf L}^2)} ^2
+ C\|  {\bf E} _N\|_{L^2(0,T;{\bf L}^2)} ^2
\end{align*}
Since ${\bf E}_N(x,0)=0$,
by applying Gronwall's inequality we derive 
\begin{align*} 
\|{\bf E}_N\|_{L^\infty(0,T;{\bf L}^2)}
\leq 
C_NC_K\|{\bf E}_N^0\|_{L^\infty(0,T;{\bf H}_n({\rm curl,div}))} .
\end{align*}
which further implies that
\begin{align}\label{ErrEN}
\|{\bf E}_N\|_{L^\infty(0,T;{\bf H}_n({\rm curl,div}))}
\leq 
C_NC_K\|{\bf E}_N^0\|_{L^\infty(0,T;{\bf H}_n({\rm curl,div}))} .
\end{align}
This prove that 
the mapping ${\cal N}:L^\infty(0,T;{\bf X}_N)\rightarrow 
L^\infty(0,T;{\bf X}_N)$ is 
continuous. Since ${\bf X}_N$ is finite dimensional,
any continuous mapping is also compact.

Secondly, we prove that the set ${\bf V}_N$
defined in \refe{SetVN}
is bounded in $L^\infty(0,T;{\bf X}_N)$. Suppose that 
${\bf \Lambda}_N^0\in {\bf V}_N$, then we define
$\Psi_N={\cal N}_1{\bf \Lambda}_N^0$
and ${\bf \Lambda}_N={\cal N}{\bf \Lambda}_N^0$,
which
satisfy ${\bf \Lambda}_N^0=s{\bf \Lambda}_N $ 
and the following equations:
\begin{align}
&\bigg(\eta\frac{\partial \Psi_N}{\partial t} ,\varphi\bigg)
+ \bigg(\bigg(\frac{i}{\kappa} \nabla + s{\bf \Lambda}_N\bigg)  \Psi_N,
\bigg(\frac{i}{\kappa} \nabla + s {\bf \Lambda}_N\bigg)\varphi\bigg)\nn\\
&\qquad\quad +\bigg( (|\Psi_N|^{2}-1) \Psi_N-is \eta\kappa
 \Psi_N \nabla\cdot{\bf\Lambda}_N,\varphi\bigg) = 0,
\label{DVPDE21}\\[8pt]
&\bigg(\frac{\partial {\bf \Lambda}_N}{\partial t} ,{\bf a}\bigg)
+ \big(\nabla\times{\bf \Lambda}_N,\nabla\times{\bf a}\big)
+\big(\nabla\cdot{\bf \Lambda}_N,\nabla\cdot{\bf a}\big)  \nn\\
&\qquad\quad 
+ \bigg( {\rm Re}\bigg[\Psi_N^*\bigg(\frac{i}{\kappa} \nabla 
+ s{\bf \Lambda}_N\bigg) \Psi_N\bigg],{\bf a}\bigg) =
 \big({\bf H} , \nabla\times {\bf a}\big) .
\label{DVPDE22}
\end{align}
It suffices to prove the
boundedness of ${\bf \Lambda}_N$ in $L^\infty(0,T;{\bf X}_N)$. 
Substituting $\varphi=\Psi_N$ into \refe{DVPDE21}
and considering the real part,
we get
\begin{align*}
&\frac{\d}{\d t}\bigg(\frac{\eta}{2}\|\Psi_N\|_{{\cal L}^2}^2\bigg)
+\bigg\|\bigg(\frac{i}{\kappa} \nabla + s{\bf \Lambda}_N\bigg)  \Psi_N
\bigg\|_{{\cal L}^2}^2 \leq \|\Psi_N\|_{{\cal L}^2}^2  \leq C, 
\end{align*}
which implies 
\begin{align}
&\|\Psi_N\|_{L^\infty(0,T;{\cal L}^2)}^2 
+\bigg\|\bigg(\frac{i}{\kappa} \nabla + s{\bf \Lambda}_N\bigg)  \Psi_N
\bigg\|_{L^2(0,T;{\cal L}^2)}^2 \leq C .
\end{align}
Substituting 
${\bf a}={\bf\Lambda}_N$ into \refe{DVPDE22},
we obtain
\begin{align}
&\frac{\d}{\d t}\bigg(\frac{1}{2}\|{\bf \Lambda}_N\|_{{\bf L}^2}^2\bigg)
+ \|\nabla\times{\bf \Lambda}_N\|_{{\bf L}^2}^2
+\|\nabla\cdot{\bf \Lambda}_N\|_{{\bf L}^2}^2  \nn\\
& =
 \big({\bf H} , \nabla\times {\bf \Lambda}_N\big)
 -\bigg( {\rm Re}\bigg[\Psi_N^*\bigg(\frac{i}{\kappa} \nabla 
+ s{\bf \Lambda}_N\bigg) \Psi_N\bigg],{\bf \Lambda}_N\bigg) \nn\\
&\leq \|{\bf H}\|_{{\bf L}^2}\|\nabla\times{\bf \Lambda}_N\|_{{\bf L}^2}
+ \bigg\|\bigg(\frac{i}{\kappa} \nabla + s{\bf \Lambda}_N\bigg)  \Psi_N
\bigg\|_{{\cal L}^2}\|{\bf \Lambda}_N\|_{{\bf L}^2} \nn\\
&\leq  \frac{1}{2}\|{\bf H}\|_{{\bf L}^2}^2
+ \frac{1}{2}\|\nabla\times{\bf \Lambda}_N\|_{{\bf L}^2}^2
+  \frac{1}{2}\bigg\|\bigg(\frac{i}{\kappa} \nabla
 + s{\bf \Lambda}_N\bigg)  \Psi_N\bigg\|_{{\cal L}^2}^2
+ \frac{1}{2}\|{\bf \Lambda}_N\|_{{\bf L}^2}^2 \nn\\
&\leq  C+\frac{1}{2}\|{\bf H}\|_{{\bf L}^2}^2
+ \frac{1}{2}\|\nabla\times{\bf \Lambda}_N\|_{{\bf L}^2}^2
+ \frac{1}{2}\|{\bf \Lambda}_N\|_{{\bf L}^2}^2, \nn
\end{align}
which implies (via Gronwall's inequality)
\begin{align}
&\|{\bf\Lambda}_N\|_{L^\infty(0,T;{\cal L}^2)}    \leq C .
\end{align}
Since ${\bf X}_N$ is finite dimensional, we have
\begin{align}
\|{\bf\Lambda}_N\|_{L^\infty(0,T;{\bf X}_N)}  
\leq C_N\|{\bf\Lambda}_N\|_{L^\infty(0,T;{\cal L}^2)}    \leq C_N .
\end{align}
Therefore, the set ${\bf V}_N$ defined in \refe{SetVN}
is bounded in $L^\infty(0,T;{\bf X}_N)$.

To conclude, we have proved that the mapping
${\cal N}$ satisfies the conditions of Lemma \ref{SchLem},
which implies that
the mapping ${\cal N}$ has a fixed point 
${\bf\Lambda}_N\in L^\infty(0,T;{\bf X}_N)$.
If we define
$\Psi_N={\cal N}_1{\bf \Lambda}_N$, then 
$(\Psi_N, {\bf\Lambda}_N)$ is a solution of 
\refe{DVPDE1}-\refe{DVPDE2}. 
It remains to present estimates on the 
regularity of $\Psi_N$ and $ {\bf\Lambda}_N$
(uniformly with respect to $N$).

Substituting $\varphi=\partial_t\Psi$ and 
${\bf a}=\partial_t{\bf\Lambda}$ into the equations, 
we obtain 
\begin{align*}
&\frac{\d}{\d t}\int_\Omega\frac{1}{2}\bigg(
\bigg|\frac{i}{\kappa}\nabla\Psi_N+{\bf \Lambda}_N\Psi_N\bigg|^2
+\frac{1}{2}(|\Psi_N|^2-1)^2
+|\nabla\times{\bf \Lambda}_N-{\bf H}|^2
+|\nabla\cdot{\bf \Lambda}_N|^2 \bigg)\d x\\
&~~~ +\int_\Omega\bigg(\bigg|
\frac{\partial {\bf \Lambda}_N}{\partial t}\bigg|^2
+\eta\bigg|\frac{\partial \Psi_N}{\partial t}\bigg|^2\bigg)\d x \\
&=\eta\kappa\int_\Omega {\rm Im}\bigg( \Psi_N 
\frac{\partial \Psi_N^*}{\partial t}\bigg)
\nabla\cdot{\bf \Lambda}_N \, \d x \\
&\leq \frac{1}{2}\int_\Omega 
\eta\bigg|\frac{\partial \Psi_N}{\partial t}\bigg|^2 \d x 
+\frac{1}{2}\int_\Omega 
\eta\kappa^2|\nabla\cdot{\bf \Lambda}_N|^2 \d x ,
\end{align*}
which implies \refe{unifEstAS}
via Gronwall's inequality.
The proof of Lemma \ref{ExAppSol} is completed.
\qed

\subsection{Existence of a weak solution}

We show that a subsequence of the approximating
solutions constructed in the last subsection
converges to a weak solution of \refe{PDE1}-\refe{PDE2}.
We need the following lemma \cite{Lions69}.
\begin{lemma}\label{ALLem}
$\!\!${\bf(Aubin--Lions)}$\;$ {\it Let
$B_1\hookrightarrow\hookrightarrow B_2\hookrightarrow B_3$ be
reflexive and separable Banach spaces. Then
$$\{u\in L^p(I;B_1)|\;u_t\in
L^q(I;B_3)\}\hookrightarrow\hookrightarrow L^p(I;B_2), \quad
1<p,q<\infty,$$ 
where the symbol ``$\hookrightarrow\hookrightarrow$''
indicates compact embedding.}
\end{lemma}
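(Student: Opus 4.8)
The plan is to follow the classical Lions--Aubin argument. Write $I=(0,T)$ and let $W$ be the space on the left-hand side, normed by $\|u\|_{L^p(I;B_1)}+\|u_t\|_{L^q(I;B_3)}$; it then suffices to show that any sequence $(u_n)$ bounded in $W$ has a subsequence converging strongly in $L^p(I;B_2)$. The first ingredient I would establish is the Ehrling-type interpolation inequality: for every $\epsilon>0$ there is $C_\epsilon>0$ with $\|v\|_{B_2}\le\epsilon\|v\|_{B_1}+C_\epsilon\|v\|_{B_3}$ for all $v\in B_1$. This follows by contradiction --- if it failed, a suitably normalized sequence would be bounded in $B_1$, hence precompact in $B_2$ and so in $B_3$, while converging to $0$ in $B_3$, contradicting $\|v\|_{B_2}=1$.

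Next, since $B_1$ and $B_3$ are reflexive and separable and $1<p,q<\infty$, the spaces $L^p(I;B_1)$ and $L^q(I;B_3)$ are reflexive, so after passing to a subsequence I may assume $u_n\rightharpoonup u$ in $L^p(I;B_1)$ and $(u_n)_t\rightharpoonup u_t$ in $L^q(I;B_3)$; replacing $u_n$ by $u_n-u$, I may take $u=0$. Integrating the interpolation inequality in time yields $\|u_n\|_{L^p(I;B_2)}\le\epsilon\|u_n\|_{L^p(I;B_1)}+C_\epsilon\|u_n\|_{L^p(I;B_3)}$, so, $\epsilon$ being arbitrary, the problem reduces to showing $\|u_n\|_{L^p(I;B_3)}\to0$. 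For this I would use a time-averaging device: after extending each $u_n$ to a slightly larger interval with comparable norms, set $v_{n,h}(t)=\frac1h\int_t^{t+h}u_n(\tau)\d\tau$. On the one hand, $(u_n)_t\in L^q(I;B_3)$ with $q>1$ gives the H\"older bound $\|u_n(t)-u_n(s)\|_{B_3}\le|t-s|^{1-1/q}\|(u_n)_t\|_{L^q(I;B_3)}$, hence $\sup_t\|v_{n,h}(t)-u_n(t)\|_{B_3}\le Ch^{1-1/q}$ \emph{uniformly in $n$}. On the other hand, for fixed $t$ and $h$, $v_{n,h}(t)$ is a bounded linear image of $u_n\in L^p(I;B_1)$, hence bounded in $B_1$ (in $n$) and weakly convergent to $0$ in $B_1$, so by the compact embedding $B_1\hookrightarrow\hookrightarrow B_2\hookrightarrow B_3$ it converges strongly to $0$ in $B_3$. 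Combining the two, $u_n(t)\to0$ in $B_3$ for every $t$, with $\sup_{n,t}\|u_n(t)\|_{B_3}<\infty$ (bounded using $u_n(0)$ and $\|(u_n)_t\|_{L^1(I;B_3)}$), so bounded convergence on the finite interval $I$ gives $\|u_n\|_{L^p(I;B_3)}\to0$ and the proof is complete.

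The hard part is this last step. The hypotheses furnish only an $L^p$-in-time (not $L^\infty$-in-time) bound in $B_1$, so one cannot directly extract pointwise-in-time precompactness in $B_1$; the averaging trick is exactly what bridges the gap, with the time regularity coming from $u_t\in L^q(I;B_3)$, $q>1$, making the averaging error uniformly small in $n$, and the compact embedding being applied only to the deterministic, time-averaged elements $v_{n,h}(t)$. The reflexivity and separability assumptions enter only to justify the weak compactness used at the outset of the reduction.
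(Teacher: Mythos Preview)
The paper does not prove this lemma; it simply quotes it with a reference to Lions' 1969 monograph \cite{Lions69} and uses it as a black box. Your proposal reproduces the classical Lions--Aubin argument (Ehrling interpolation, reduction to strong convergence in $L^p(I;B_3)$, then the time-averaging trick to convert $L^p$-in-time $B_1$ bounds into pointwise $B_1$ bounds on the mollified sequence), and the outline is correct. Two small technical remarks: the extension of $u_n$ beyond $I$ so that $v_{n,h}$ is defined on all of $I$ should be done carefully (e.g.\ by reflection, which preserves both the $L^p(I;B_1)$ and $W^{1,q}(I;B_3)$ norms up to a constant); and the uniform bound $\sup_{n,t}\|u_n(t)\|_{B_3}<\infty$ does not follow from ``$u_n(0)$'' alone, since $\|u_n(0)\|_{B_3}$ is not given a~priori --- instead combine the uniform $L^p(I;B_3)$ bound (which yields, for each $n$, a point $t_n$ with $\|u_n(t_n)\|_{B_3}$ controlled) with the uniform H\"older estimate $\|u_n(t)-u_n(s)\|_{B_3}\le C|t-s|^{1-1/q}$. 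With these cosmetic fixes the argument is complete.
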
\medskip

On one hand, by choosing $B_1={\cal H}^1$, $B_2=B_3={\cal L}^2$ 
and $1<p=q<\infty$ in Lemma \ref{ALLem},
from \refe{unifEstAS} 
we see that the set $\{\Psi_N:N=1,2,\cdots\}$
is compact in $L^p(0,T;{\cal L}^2)$.
Therefore, there exists a subsequence $\Psi_{N_m}$,
$m=1,2,\cdots$, which converges to a function
$\psi$ in $L^p(0,T;{\cal L}^2)$ and the convergence is also pointwise
a.e. in $\Omega\times(0,T)$.
Since $|\Psi_{N_m}|\leq 1$ a.e. in $\Omega\times(0,T)$, it follows that
$|\psi|\leq 1$ a.e. in $\Omega\times(0,T)$.
By the Lebesgue dominated convergence theorem,
the sequence $\Psi_{N_m}$ also converges to  
$\psi$ in $L^p(0,T;{\cal L}^p)$.
On the other hand, by the Eberlein-Shmulyan theorem 
(see Page 141 of \cite{Yosida}),
by passing to a subsequence if necessary, 
$\Psi_{N_m}$ converges to $\psi$ weakly in 
$L^p(0,T;{\cal H}^1)$ 
and weakly$^*$ in $L^\infty(0,T;{\cal H}^1)$, and
$\partial_t\Psi_{N_m}$ converges to $\partial_t\psi$ weakly in 
$L^2(0,T;{\cal L}^2)$.
In a similar way, by passing to a subsequence if necessary, 
we can derive
the convergence of ${\bf\Lambda}_{N_m}$
to a function ${\bf A}\in L^\infty(0,T;{\bf H}_{\rm n}({\rm curl},{\rm div}))
\cap H^1(0,T;{\bf L}^2)$.
For the reader's convenience, we summarize the
convergence of $\Psi_{N_m}$ and ${\bf\Lambda}_{N_m}$ below:
there exist
\begin{align*}
& \psi\in L^\infty(0,T;{\mathcal  H}^1)
\cap H^1(0,T;{\mathcal  L}^2),\quad |\psi|\leq 1\,\,\, 
\mbox{a.e. in $\Omega\times(0,T)$},\\
&{\bf A}\in L^\infty(0,T;{\bf H}_{\rm n}({\rm curl},{\rm div}))
\cap H^1(0,T;{\bf L}^2),
\end{align*}
such that 
\begin{align*}
& \Psi_{N_m} \rightharpoonup
\psi\quad\mbox{weakly$^*$ in}~~ L^\infty(0,T;{\mathcal  H}^1) ,\\
& \Psi_{N_m} \rightharpoonup
\psi\quad\mbox{weakly in}~~ L^p(0,T;{\mathcal  H}^1) ~~
\mbox{for any $1<p<\infty$} ,\\
&\partial_t\Psi_{N_m} \rightharpoonup
\partial_t\psi\quad\mbox{weakly in}~~ L^2(0,T;{\mathcal  L}^2) ,\\
& \Psi_{N_m} \rightarrow
\psi\quad \mbox{strongly in}~~ L^{p}(0,T;{\mathcal  L}^{p})~~
\mbox{for any $1<p<\infty$ } ,\\
& {\bf\Lambda}_{N_m} \rightharpoonup
{\bf A}\quad\mbox{weakly$^*$ in}~~ 
L^\infty(0,T;{\bf H}_{\rm n}({\rm curl},{\rm div})) ,\\
& {\bf\Lambda}_{N_m} \rightharpoonup
{\bf A}\quad\mbox{weakly in}~~ 
L^p(0,T;{\bf H}_{\rm n}({\rm curl},{\rm div})) ~~
\mbox{for any $1<p<\infty$} ,\\
&\partial_t{\bf\Lambda}_{N_m} \rightharpoonup
\partial_t{\bf A}\quad\mbox{weakly in}~~ L^2(0,T;{\bf L}^2) ,\\
& {\bf\Lambda}_{N_m} \rightarrow
{\bf A}\quad \mbox{strongly in}~~ L^p(0,T;{\bf L}^{3+\delta})
~~\mbox{for any $1<p<\infty$} .
\end{align*}
It is easy to see that the convergences described above imply
\begin{align*}
&\Psi_{N_m}{\bf\Lambda}_{N_m} \rightarrow
\psi{\bf A}\quad\mbox{strongly in}~~ 
L^2(0,T;({\mathcal  L}^2)^3) ,\\
&\nabla\Psi_{N_m}\cdot{\bf\Lambda}_{N_m} \rightharpoonup
\nabla\psi\cdot{\bf A}\quad
\mbox{weakly in}~~ L^2(0,T;{\mathcal  L}^{6/5}) ,\\
&\Psi_{N_m}|{\bf\Lambda}_{N_m}|^2 \rightarrow
\psi|{\bf A}|^2\quad\mbox{strongly in}~~ 
L^2(0,T;{\mathcal  L}^{3/2}) ,\\
&\Psi_{N_m} \nabla\cdot{\bf\Lambda}_{N_m}\rightarrow \psi\nabla\cdot{\bf A}
\quad\mbox{weakly in}~~ L^2(0,T;{\cal L}^{3/2}),\\
&\Psi_{N_m}^*\bigg(\frac{i}{\kappa} \nabla 
+{\bf \Lambda}_{N_m}\bigg) \Psi_{N_m}
 \rightharpoonup
\psi^*\bigg(\frac{i}{\kappa} \nabla 
+{\bf A}\bigg) \psi\quad\mbox{weakly in}~~ 
L^2(0,T;({\mathcal  L}^{3/2})^3) .
\end{align*}
For any given 
$\varphi\in L^2(0,T;{\cal H}^1)
\hookrightarrow L^2(0,T;{\mathcal  L}^{6})$
and ${\bf a}\in L^2(0,T;{\bf X}_N)
\hookrightarrow L^2(0,T;{\bf L}^{3+\delta})$,
integrating \refe{DVPDE1}-\refe{DVPDE2}
with respect to time, we obtain 
\begin{align*}
&\int_0^T\bigg[\bigg(\eta\frac{\partial \Psi_{N_m}}{\partial t} ,\varphi\bigg)
+ \bigg(\bigg(\frac{i}{\kappa} \nabla + {\bf\Lambda}_{N_m}\bigg)  \Psi_{N_m},
\bigg(\frac{i}{\kappa} \nabla + {\bf\Lambda}_{N_m}\bigg)\varphi\bigg)\bigg]\d t\nn\\
&\qquad\qquad\quad +\int_0^T\bigg[\bigg( (|\Psi_{N_m}|^{2}-1)\Psi_{N_m}
 -i\eta \kappa \Psi_{N_m}\nabla\cdot {\bf\Lambda}_{N_m},\varphi\bigg)\bigg]\d t = 0,
\\[5pt]
&\int_0^T\bigg[\bigg(\frac{\partial {\bf\Lambda}_{N_m}}{\partial t} ,{\bf a}\bigg)
+ \big(\nabla\times{\bf\Lambda}_{N_m},\nabla\times{\bf a}\big)
+\big(\nabla\cdot{\bf\Lambda}_{N_m},\nabla\cdot{\bf a}\big) \bigg]\d t   \nn\\
&=
 \int_0^T\bigg[\big({\bf H}, \nabla\times {\bf a}\big)
 -\bigg( {\rm Re}\bigg[\Psi_{N_m}^*\bigg(\frac{i}{\kappa} \nabla 
+{\bf\Lambda}_{N_m}\bigg)\Psi_{N_m}\bigg],{\bf a}\bigg) \bigg]\d t .
\end{align*}
Letting $m\rightarrow \infty$ in the equations above,
we derive that \refe{VPDE1}-\refe{VPDE2}
hold for any $\varphi\in L^2(0,T;{\cal H}^1)$
and ${\bf a}\in L^2(0,T;{\bf X}_N)$.
Since $L^2(0,T;{\bf X}_N)$ is dense in $L^2(0,T;{\bf H}_{\rm n}({\rm curl,div}))$,
it follows that \refe{VPDE1}-\refe{VPDE2} also hold for any 
$\varphi\in L^2(0,T;{\cal H}^1)$
and ${\bf a}\in L^2(0,T;{\bf H}_{\rm n}({\rm curl,div}))$.

It remains to prove 
$\nabla\times {\bf A}\in L^2(0,T;{\bf H}_{\rm n}({\rm curl,div}))$ and 
$\nabla\cdot{\bf A}\in L^2(0,T;H^1)$. Then 
$(\varphi,{\bf A})$ is a solution of 
\refe{VPDE1}-\refe{VPDE2} in the sense of Definition \ref{DefWSol}.
For this purpose, we consider \refe{PDE2}, which implies
\begin{align*}
&\|\nabla\times(\nabla\times{\bf A})
-\nabla(\nabla\cdot{\bf A})\|_{L^2(0,T;{\bf L}^2)}\\
&\leq C\|\partial_t{\bf A}\|_{L^2(0,T;{\bf L}^2)}
+C\|\psi^*(i\kappa^{-1} \nabla \psi + \mathbf{A} \psi)\|_{L^2(0,T;{\bf L}^2)}
+C\|\nabla\times {\bf H}\|_{L^2(0,T;{\bf L}^2)}\\
&\leq C\|\partial_t{\bf A}\|_{L^2(0,T;{\bf L}^2)}
+C\| \nabla \psi\|_{L^2(0,T;{\mathcal  L}^2)} 
+C\| \mathbf{A} \|_{L^2(0,T;{\bf L}^2)}
+C\|\nabla\times {\bf H}\|_{L^2(0,T;{\bf L}^2)}\\
&\leq C . 
\end{align*}
If we define ${\bf Q}=\nabla\times{\bf A}-{\bf H}$,
then ${\bf Q}$ is a divergence-free vector fields which 
satisfies the equation 
\begin{align*}
\left\{\begin{array}{ll}
\nabla\times(\nabla\times {\bf Q})
= \nabla\times {\bf f}  &\mbox{in}\,\,\,\Omega,\\
{\bf Q}\times{\bf n}=0 &\mbox{on}\,\,\,\partial\Omega, 
\end{array}\right.
\end{align*}
where 
$${\bf f}=\nabla\times{\bf Q}-\nabla(\nabla\cdot{\bf A})
=\nabla\times(\nabla\times{\bf A})
-\nabla(\nabla\cdot{\bf A})-\nabla\times {\bf H}\in L^2(0,T;{\bf L}^2) .$$
The standard energy estimate of the equation above gives 
$\nabla\times {\bf Q}\in L^2(0,T;{\bf L}^2)$,
which implies ${\bf Q}\in L^2(0,T;{\bf H}({\rm curl}))$.
Then we further derive 
$\nabla(\nabla\cdot{\bf A})=\nabla\times {\bf Q} 
-{\bf f} \in L^2(0,T;{\bf L}^2)$.

Existence of a weak solution is proved.

\subsection{Uniqueness of the weak solution}

Suppose that there are two weak solutions 
$(\psi,{\bf A})$ and $(\Psi,{\bf\Lambda})$ for 
the system \refe{PDE1}-\refe{init} 
in the sense of Definition \ref{DefWSol}.
Then we define 
$e=\psi-\Psi$ and ${\bf E}={\bf A}-{\bf\Lambda}$
and consider the difference equations
\begin{align}
&\int_0^T\Big[\big(\eta\partial_t e  ,\varphi\big)
+ \frac{1}{\kappa^2}\big(\nabla  e, \nabla\varphi\big) 
+ \big(|{\bf A}|^2   e,  \varphi\big) \Big]\d t\nn\\
& =\int_0^T\Big[-\frac{i}{\kappa}\big({\bf A}\cdot\nabla e ,\varphi\big)
-\frac{i}{\kappa}\big({\bf E}\cdot\nabla \Psi ,\varphi\big)
+\frac{i}{\kappa}\big(e {\bf A},\nabla\varphi\big)
+\frac{i}{\kappa}\big(\Psi {\bf E},\nabla\varphi\big)  \nn\\
&\quad - \big((|{\bf A}|^2 -|{\bf \Lambda}|^2)  \Psi,  \varphi\big)
 -\big( (|\psi|^{2}-1) \psi-(|\Psi|^{2}-1) \Psi,\varphi\big)\Big]\d t\nn\\
&\quad -\int_0^T\big(i\eta\kappa\psi\nabla\cdot{\bf E}
+i\eta\kappa e\nabla\cdot{\bf \Lambda},\varphi\big)\d t  ,
\label{UErEq1}
\end{align}
and
\begin{align}
&\int_0^T\Big[\big(\partial_t{\bf E} ,{\bf a}\big)
+ \big(\nabla\times{\bf E},\nabla\times{\bf a}\big)
+\big(\nabla\cdot{\bf E},\nabla\cdot{\bf a}\big) \Big]\d t\nn\\
& =-\int_0^T{\rm Re} \bigg( 
\frac{i}{\kappa}( \psi^*\nabla  \psi- \Psi^*\nabla  \Psi)
+  {\bf A}(|\psi|^2-|\Psi|^2)+|\Psi|^2 {\bf E}\, ,\, {\bf a}\bigg) \d t ,
\label{UErEq2}
\end{align}
which hold for any $\varphi\in L^2(0,T;{\mathcal H}^1)$ and 
${\bf a}\in L^2(0,T;{\bf H}_{\rm n}({\rm curl},{\rm div}))$.
Choosing $\varphi(x,t)=e(x,t)1_{(0,t')}(t)$ in \refe{UErEq1} 
and considering the real part,
we obtain
\begin{align*}
& \frac{\eta}{2} \|e(\cdot,t') \|_{{\cal L}^2}^2 
+ \int_0^{t'}\Big(\frac{1}{\kappa^2}\|\nabla  e\|_{{\cal L}^2}^2
+  \|{\bf A}    e\|_{{\bf L}^2}^2\Big)\d t \\
&\leq 
\int_0^{t'}\Big(C\|{\bf A}\|_{{\bf L}^{3+\delta}}\|\nabla e\|_{{\cal L}^2}
 \|e\|_{{\cal L}^{6-4\delta/(1+\delta)}}
+C\|{\bf E}\|_{{\bf L}^{3+\delta}}\|\nabla \Psi\|_{{\cal L}^2}
\|e\|_{{\cal L}^{6-4\delta/(1+\delta)}}\\
&\quad 
+C\| e\|_{{\cal L}^{6-4\delta/(1+\delta)}} \|{\bf A}\|_{{\bf L}^{3+\delta}}
\|\nabla e\|_{{\cal L}^2}  +C\| {\bf E}\|_{{\bf L}^2}\|\nabla e\|_{{\cal L}^2}  \\
&\quad
+C(\|{\bf A}\|_{{\bf L}^{3+\delta}}+\|{\bf \Lambda}\|_{{\bf L}^{3+\delta}})
\|{\bf E}\|_{{\bf L}^2} \|e\|_{{\cal L}^{6-4\delta/(1+\delta)}} +C\| e\|_{{\cal L}^2}^2
+C\|\nabla\cdot{\bf E}\|_{L^2}\|e\|_{{\cal L}^2}\Big)\d t \\
&\leq \int_0^{t'}\Big(C\|\nabla e\|_{L^2}
 (C_\epsilon \|e\|_{{\cal L}^2}+\epsilon\|\nabla e\|_{{\cal L}^2})
 +C\|{\bf E}\|_{{\bf H}_{\rm n}({\rm curl},{\rm div})}
 (C_\epsilon \|e\|_{{\cal L}^2}+\epsilon\|\nabla e\|_{{\cal L}^2})\\
&\quad
+C\|\nabla e\|_{{\cal L}^2}(C_\epsilon \|e\|_{{\cal L}^2}
+\epsilon\|\nabla e\|_{{\cal L}^2}) 
+C\| {\bf E}\|_{{\bf L}^2}\|\nabla e\|_{{\cal L}^2} \\
&\quad
+C\|{\bf E}\|_{{\bf L}^2}(C_\epsilon \|e\|_{{\cal L}^2}
+\epsilon\|\nabla e\|_{{\cal L}^2}) +C\| e\|_{{\cal L}^2}^2
+C\|\nabla\cdot {\bf E}\|_{L^2}\| e\|_{{\cal L}^2} \Big)\d t\\  
&\leq \int_0^{t'}\Big(\epsilon\|\nabla e\|_{{\cal L}^2}^2+
\epsilon\|\nabla\times{\bf E}\|_{{\bf L}^2}^2
+ \epsilon\|\nabla\cdot{\bf E}\|_{L^2}^2 
+C_\epsilon\|e\|_{{\cal L}^2}^2
+ C_\epsilon\|{\bf E}\|_{{\bf L}^2}^2\Big)\d t ,
\end{align*}
where $\epsilon$ can be arbitrarily small.
By choosing ${\bf a}(x,t)={\bf E}(x,t)1_{(0,t')}(t)$ in 
\refe{UErEq2}, we get
\begin{align*}
& \frac{1}{2}\|{\bf E}(\cdot,t')\|_{{\bf L}^2}^2 
+\int_0^{t'}\Big(\|\nabla\times{\bf E} \|_{{\bf L}^2}^2
+\|\nabla\cdot{\bf E} \|_{{\bf L}^2}^2 \Big)\d t\\
&\leq \int_0^{t'}\Big(C \| e\|_{{\cal L}^{6-4\delta/(1+\delta)}}
\|\nabla  \psi\|_{L^2}\| {\bf E}\|_{{\bf L}^{3+\delta}}
+C\|\nabla  e \|_{{\cal L}^2} \| {\bf E}\|_{{\bf L}^2}  \\
&\quad
+ (\|e\|_{L^{6-4\delta/(1+\delta)}}\| {\bf A}\|_{{\bf L}^{3+\delta}}
+\|{\bf E}\|_{{\bf L}^2})\|{\bf E}\|_{L^2}\Big)\d t\\
&\leq \int_0^{t'}\Big(C(C_\epsilon \| e\|_{{\cal L}^2} 
+\epsilon\|\nabla  e \|_{{\cal L}^2})
\| {\bf E}\|_{{\bf H}_{\rm n}({\rm curl},{\rm div})}
+\|\nabla  e \|_{{\cal L}^2} \| {\bf E}\|_{L^2} \\
&\quad + (\|e\|_{{\cal L}^2} +\|\nabla e\|_{{\cal L}^2}
+\|{\bf E}\|_{{\bf L}^2})\|{\bf E}\|_{{\bf L}^2}\Big)\d t\\
&\leq
\int_0^{t'}\Big(\epsilon\|\nabla e\|_{{\cal L}^2}^2
+\epsilon\|\nabla\times{\bf E}\|_{{\bf L}^2}
+\epsilon\|\nabla\cdot{\bf E}\|_{{\bf L}^2}
+C_\epsilon  \|e\|_{{\cal L}^2}^2
+ C_\epsilon  \|{\bf E}\|_{{\bf L}^2}^2\Big)\d t ,
\end{align*}
where $\epsilon$ can be arbitrarily small.
By choosing $\epsilon<\frac{1}{4}
\min(1, \kappa^{-2} )$ and summing up the  
two inequalities above, we have
\begin{align*}
&  \frac{\eta}{2}\|e(\cdot,t')\|_{L^2}^2
+\frac{1}{2}\|{\bf E}(\cdot,t')\|_{L^2}^2 
\leq 
\int_0^{t'}\Big(C\|e\|_{L^2}^2 
+C\| {\bf E}\|_{L^2}^2\Big)\d t ,
\end{align*}
which implies
\begin{align*}
&\max_{t\in(0,T)}
\bigg(\frac{\eta}{2}\|e\|_{L^2}^2
+\frac{1}{2}\|{\bf E}\|_{L^2}^2\bigg)=0  
\end{align*}
via Gronwall's inequality. 
Uniqueness of the weak solution is proved.\medskip
   
The proof of Theorem \ref{MainTHM1} is completed.
\qed

\end{document}